\title[Expanding gradient Ricci solitons]{New expanding Ricci solitons \\ starting in dimension four}
\author[Nienhaus and Wink]{Jan Nienhaus and Matthias Wink}
\address{Mathematisches Institut, Universit\"at M\"unster, Einsteinstra{\ss}e 62, 48149 M\"unster}
\email{j.nienhaus@uni-muenster.de}
\email{mwink@uni-muenster.de}
\keywords{Ricci solitons, Einstein metrics, warped products}
\subjclass[2020]{53C25, 53E20}
\begin{document}
\newcommand{\Ext}{\bigwedge\nolimits}
\newcommand{\Div}{\operatorname{div}}
\newcommand{\Hol} {\operatorname{Hol}}
\newcommand{\diam} {\operatorname{diam}}
\newcommand{\Scal} {\operatorname{Scal}}
\newcommand{\scal} {\operatorname{scal}}
\newcommand{\Ric} {\operatorname{Ric}}
\newcommand{\Hess} {\operatorname{Hess}}
\newcommand{\grad} {\operatorname{grad}}
\newcommand{\Sect} {\operatorname{Sect}}
\newcommand{\Rm} {\operatorname{Rm}}
\newcommand{ \Rmzero } {\mathring{\Rm}}
\newcommand{\Rc} {\operatorname{Rc}}
\newcommand{\Curv} {S_{B}^{2}\left( \mathfrak{so}(n) \right) }
\newcommand{ \tr } {\operatorname{tr}}
\newcommand{ \id } {\operatorname{id}}
\newcommand{ \Riczero } {\mathring{\Ric}}
\newcommand{ \ad } {\operatorname{ad}}
\newcommand{ \Ad } {\operatorname{Ad}}
\newcommand{ \dist } {\operatorname{dist}}
\newcommand{ \rank } {\operatorname{rank}}
\newcommand{\Vol}{\operatorname{Vol}}
\newcommand{\dVol}{\operatorname{dVol}}
\newcommand{ \zitieren }[1]{ \hspace{-3mm} \cite{#1}}
\newcommand{ \pr }{\operatorname{pr}}
\newcommand{\diag}{\operatorname{diag}}
\newcommand{\Lagr}{\mathcal{L}}
\newcommand{\av}{\operatorname{av}}
\newcommand{ \floor }[1]{ \lfloor #1 \rfloor }
\newcommand{ \ceil }[1]{ \lceil #1 \rceil }
\newcommand{\Sym} {\operatorname{Sym}}
\newcommand{\bcirc}{ \ \bar{\circ} \ }
\newcommand{\conj}[1]{ \overline{ #1 } }
\newcommand{\sign}[1]{\operatorname{sign}(#1)}
\newcommand{\cone}{\operatorname{cone}}
\newcommand{\pbd}{\varphi_{bar}^{\delta}}

\newtheorem{theorem}{Theorem}[section]
\newtheorem{definition}[theorem]{Definition}
\newtheorem{example}[theorem]{Example}
\newtheorem{remark}[theorem]{Remark}
\newtheorem{lemma}[theorem]{Lemma}
\newtheorem{proposition}[theorem]{Proposition}
\newtheorem{corollary}[theorem]{Corollary}
\newtheorem{assumption}[theorem]{Assumption}
\newtheorem{acknowledgment}[theorem]{Acknowledgment}
\newtheorem{DefAndLemma}[theorem]{Definition and lemma}

\newcommand{\R}{\mathbb{R}}
\newcommand{\N}{\mathbb{N}}
\newcommand{\Z}{\mathbb{Z}}
\newcommand{\Q}{\mathbb{Q}}
\newcommand{\C}{\mathbb{C}}
\newcommand{\F}{\mathbb{F}}
\newcommand{\X}{\mathcal{X}}
\newcommand{\D}{\mathcal{D}}
\newcommand{\Cont}{\mathcal{C}}

\renewcommand{\labelenumi}{(\alph{enumi})}
\newtheorem{maintheorem}{Theorem}[]
\renewcommand*{\themaintheorem}{\Alph{maintheorem}}
\newtheorem*{theorem*}{Theorem}
\newtheorem*{corollary*}{Corollary}
\newtheorem*{remark*}{Remark}
\newtheorem*{example*}{Example}
\newtheorem*{question*}{Question}
\newtheorem*{definition*}{Definition}
\newtheorem{conjecture}[maintheorem]{Conjecture}

\begin{abstract}
We prove that there exists a gradient expanding Ricci soliton asymptotic to any given cone over the product of a round sphere and a Ricci flat manifold. In particular we obtain asymptotically conical expanding Ricci solitons with positive scalar curvature on $\R^3 \times S^1.$ More generally we construct continuous families of gradient expanding Ricci solitons on trivial vector bundles over products of Einstein manifolds with arbitrary Einstein constants. 
\end{abstract}

\text{} 

\vspace{-5mm}

\maketitle

\vspace{-5mm}

\section*{Introduction}

Perelman successfully introduced Ricci flow with surgery to prove the Poincar\'e conjecture in dimension three and more generally Thurston's geometrization conjecture, \cite{PerelmanEntropyFormula, PerelmanRFwithSurgery}. In particular, Perelman could perform a careful singularity analysis continue the Ricci flow past singularities. 

Work of Feldman-Ilmanen-Knopf \cite{FIKSolitons} and M{\'a}ximo \cite{MaximoBlowUpFourDimRF} shows that in dimension four and higher, Ricci flows on compact manifolds may also develop conical singularities. Moreover, the examples of Feldman-Ilmanen-Knopf \cite{FIKSolitons} and Angenent-Knopf \cite{AngenentKnopfRSConicalSingNonuniqueness} indicate that asymptotically conical expanding Ricci solitons may be used to continue the flow past the singular time. This is also supported by a result of Gianniotis-Schulze  \cite{GianniotisSchulzeRFIsolatedConicalSingularities} who constructed Ricci flows on compact manifolds with conical singularities by gluing in asymptotically conical expanding gradient Ricci solitons.

Based on these examples, Bamler-Chen \cite[Question 1.1]{BamlerChenDegreeTheoryRS} asked if for a given $4$-dimensional Riemannian cone with nonnegative scalar curvature there is a gradient expanding Ricci soliton with nonnegative scalar curvature which is asymptotic to the given cone. Bamler-Chen moreover solved the question affirmatively if the link of the cone is diffeomorphic to the $3$-sphere. 

In this paper we provide examples of expanding gradient Ricci solitons asymptotic to any cone whose link is isometric to $S^2 \times S^1$ where the metrics on the spheres are round with arbitrary radii.

More generally, we prove the following theorem. 

\begin{maintheorem}
\label{MainTheorem}
    Let $d_1 \geq 1$ and let $(M_i,g_i)$ be Einstein manifolds with $\Ric(g_i)=\mu_i g_i$ for $i=2, \ldots, r.$

    \begin{enumerate}
        \item There exists an $r$-parameter family of complete gradient expanding Ricci solitons and an $(r-1)$-parameter family of complete Einstein metrics with negative scalar curvature on $\R^{d_1+1} \times M_2 \times \ldots \times M_r$.
        \item If $\mu_i \geq 0$ for all $i$, then the expanding Ricci solitons are asymptotically conical. 
        \item If $d_1\ge 2$ and $\mu_i=0$ for all $i\ne 1$, then all cones with link of the form
    \begin{align*}
     \left( S^{d_1} \times  M_2 \times \ldots \times M_r, \  \sigma_1^{-2} g_1 + \ldots + \sigma_r^{-2} g_r \right), \ \sigma_i>0
    \end{align*}
    occur as asymptotic cones, where $g_1$ denotes the round metric on $S^{d_1}$.
    \end{enumerate}
\end{maintheorem}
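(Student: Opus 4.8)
The plan is to realize all of these metrics as multiply warped products
\[
g = dt^2 + \varphi_1(t)^2 g_1 + \varphi_2(t)^2 g_2 + \ldots + \varphi_r(t)^2 g_r
\]
on $(0,\infty) \times S^{d_1} \times M_2 \times \ldots \times M_r$, where $d_i = \dim M_i$ and $g_1$ denotes the round metric of radius one on $S^{d_1}$, so that $\Ric(g_1) = (d_1-1) g_1$. Such a $g$ closes up smoothly over $\{t=0\}$ to a metric on $\R^{d_1+1} \times M_2 \times \ldots \times M_r$ with soul $\{0\} \times M_2 \times \ldots \times M_r$ precisely when $\varphi_1$ extends to a smooth odd function with $\varphi_1(0)=0$ and $\varphi_1'(0)=1$, and each $\varphi_i$, $i \geq 2$, extends to a smooth even function with $\varphi_i(0)=b_i>0$. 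Writing $n = d_1 + d_2 + \ldots + d_r + 1$ and imposing the expanding soliton equation $\Ric + \Hess f = \tfrac{1}{2}\varepsilon\, g$ with $\varepsilon<0$ and $f = f(t)$ (so $f'(0)=0$), normalized to $\varepsilon=-1$, reduces the problem to a second order ODE system for $(\varphi_1,\ldots,\varphi_r,f)$ --- the $dt^2$-equation together with one equation tangent to each factor --- subject to the identity $\Scal + |\grad f|^2 - \varepsilon f \equiv \mathrm{const}$ that any gradient soliton satisfies; the Einstein metrics form the sublocus $f \equiv \mathrm{const}$. Passing to the scale invariant variables $x_i = \varphi_i'/\varphi_i$ (together with a normalized first derivative of $f$) recasts the system as an autonomous polynomial flow, whose relevant stationary configurations are the metric cones $dt^2 + t^2 g_L$.

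\textbf{Local solutions and the parameter count.} The system is singular at $t=0$: the sphere curvature contributes terms of order $\varphi_1^{-2}$, and the warping interaction terms of order $\varphi_i^{-1}$. I would invoke a standard existence theorem for singular initial value problems of this type (as used in warped product Einstein and Ricci soliton constructions, e.g.\ by Eschenburg--Wang, B\"ohm and Dancer--Wang) to produce, for every choice of bolt radii $(b_2,\ldots,b_r)\in(0,\infty)^{r-1}$ and every admissible value of one further, momentum-type parameter furnished by the singular analysis (roughly, the value $\varphi_1'''(0)$, which the leading-order equations leave free once $\varepsilon$ is fixed), a unique smooth solution on a maximal interval $[0,T)$ with all $\varphi_i>0$ on $(0,T)$. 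This yields the $r$-parameter family of local expanding solitons. Imposing in addition $f \equiv \mathrm{const}$ pins the momentum parameter to a definite value and leaves the $(r-1)$-parameter family of local Einstein metrics, which satisfy $\Scal = \tfrac{1}{2}n\varepsilon < 0$.

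\textbf{Completeness.} The main obstacle is to upgrade these to complete metrics, i.e.\ to prove $T=\infty$ and that each $\varphi_i$ stays bounded away from $0$ on $(0,\infty)$; completeness then follows because $g \geq dt^2$, so that any finite-length curve remains in a region $\{t \leq \mathrm{const}\}$ on which $g$ is smooth. I would control the trajectory via the soliton structure, combining the above conservation identity with sign and monotonicity estimates for $f'$ --- in the spirit of a priori estimates on the soliton potential --- to rule out finite-time blow-up of $\varphi_i$, $\varphi_i'$, $f'$ and collapse $\varphi_i \to 0$; in the autonomous picture this amounts to trapping the trajectory in a fixed compact region of phase space. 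I expect this step to be the crux, since the Einstein constants $\mu_i$ enter these estimates with arbitrary sign and the bounds have to hold uniformly over the full parameter range.

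\textbf{Asymptotics and prescribing the cone.} For (2), assuming $\mu_i \geq 0$, I would show the trajectory converges as $t\to\infty$ to a cone stationary configuration --- equivalently $\varphi_i(t) = a_i t + o(t)$ with $a_i>0$ and enough decay of $\varphi_i''$ for the rescaled metrics to converge --- so that $g$ is asymptotically conical with link $\left( S^{d_1} \times M_2 \times \ldots \times M_r,\ a_1^2 g_1 + a_2^2 g_2 + \ldots + a_r^2 g_r \right)$; the hypothesis $\mu_i\geq 0$ is precisely what forces linear, rather than exponential, growth of the $\varphi_i$. For (3), with $d_1\geq 2$ and $\mu_i=0$ for $i\neq 1$, I would first observe that, exactly because $\mu_i=0$, each rescaling $\varphi_i \mapsto c_i\varphi_i$ for $i\geq 2$ (with $f$ and the other warping functions unchanged) maps solitons to solitons and alters only $b_i \mapsto c_i b_i$ and $a_i \mapsto c_i a_i$; this reduces the task to prescribing the single asymptotic radius $a_1$ of the sphere factor. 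That $a_1$ runs over all of $(0,\infty)$ as the momentum parameter varies is then a one-dimensional shooting argument, obtained by analyzing the limiting behaviour of the family at the two ends of the parameter interval --- this is where $d_1\geq 2$ enters. Combining, for any $\sigma_1,\ldots,\sigma_r>0$ one obtains a complete expanding gradient Ricci soliton asymptotic to the cone with link $\sigma_1^{-2}g_1 + \ldots + \sigma_r^{-2}g_r$, giving (3); in contrast with the degree-theoretic approach of \cite{BamlerChenDegreeTheoryRS}, the product structure here reduces the surjectivity onto cones largely to scaling, requiring a genuine shooting argument only in the sphere direction.
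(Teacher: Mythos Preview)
Your outline matches the paper's approach in all essentials: the same multiply warped ansatz and parameter count (bolt radii plus one soliton parameter, which the paper takes to be $C=(d_1+1)\ddot u(0)$ rather than $\varphi_1'''(0)$), the same rescaling of the Ricci flat factors to reduce (c) to prescribing the single sphere slope, and the same shooting in that slope by sending the soliton parameter to its two extremes ($C\to 0$ toward the Einstein locus, $C\to-\infty$ toward an invariant $2$-dimensional subsystem). The one miscalibration is where the work lies: completeness is in fact quick---one checks $\mu_i/f_i^2+\tfrac{\varepsilon}{2}>0$ is preserved, so the shape operator stays positive, and \cite{WinkSolitonsViaEstimatesOnPotential} finishes---whereas the substantive effort is in (b), proving $\sigma_i=\lim\dot\varphi_i\in(0,\infty)$ via ODE comparison in the desingularized variables, and in (c), where the shooting needs both continuity of $\sigma_1$ in $C$ (uniform estimates near the attracting origin) and a monotonicity lemma for $Y_1/X_1$ that pins $\sigma_1$ once the trajectory is close to the relevant limit.
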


By Bamler-Chen \cite{BamlerChenDegreeTheoryRS}, the expanding Ricci solitons have positive scalar curvature if the asymptotic cone has positive scalar curvature, see also proposition \ref{ScalAtInfinity}. \vspace{2mm}

Note that Theorem \ref{MainTheorem} (a) does not make any assumptions on the Einstein constants. This generalizes work of B\"ohm \cite{BohmNonCompactComhomOneEinstein} on Einstein manifolds with negative scalar curvature and Dancer-Wang \cite{DWExpandingSolitons} on expanding Ricci solitons who all considered Einstein manifolds $(M_i, g_i)$ with positive scalar curvature and $d_1 \geq 2$. The corresponding results for $d_1=1$ were established by Buzano-Dancer-Gallaugher-Wang in \cite{BDGWExpandingSolitons}. All examples are also of warped product type.

Theorem \ref{MainTheorem} (b) confirms the expectation of Dancer-Wang \cite[Remark 3.16]{DWExpandingSolitons} that their Ricci solitons are indeed asymptotically conical. The case of doubly warped products was first considered by Gastel-Kronz \cite{GKExpandingRS} who constructed asymptotically conical expanding Ricci solitons on $\R^{d_1+1} \times M$ with $M$ positive Einstein. Angenent-Knopf \cite{AngenentKnopfRSConicalSingNonuniqueness} gave an independent construction of expanders on $\R^{p+1} \times S^q$ for $p, q \geq 2$ and $p+q \leq 8$ and moreover proved that in this setting there are multiple expanding Ricci solitons asymptotic to the same cone. 

Expanding Ricci solitons with nonnegative respectively positive curvature operators coming out of cones were constructed by Schulze-Simon \cite{SchulzeSimonExpandersComingOutOfCones} and Deruelle \cite{DeruelleSmoothingPosCurvedMetricCones}. In particular, Deruelle provided a classification of asymptotically conical gradient expanding Ricci solitons with nonnegative curvature operators. In contrast, the examples in Theorem \ref{MainTheorem} (c) always have negative Ricci curvatures at the singular orbit in the directions tangent to the Ricci flat factors. 

In the K\"ahler case, generalizing earlier work of Cao \cite{CaoLimitsOfSolutionsKRF}, Dancer-Wang \cite{DWCohomOneSolitons}, Feldman-Ilmanen-Knopf \cite{FIKSolitons} and Siepmann \cite{SiepmannRFofFRcones}, Conlon-Deruelle \cite{ConlonDeruelleExpandingConicalKRS} constructed asymptotically conical expanding gradient K\"ahler Ricci solitons on the total space of the vector bundle $L^{\oplus(n+1)}$, where $L$ is a negative line bundle over a compact K\"ahler manifold $X$ with $c_1( K_X \otimes (L^{*})^{n+1})>0$ and $n \in \N_0.$ Further classification results, in particular for asymptotically conical gradient expanding K\"ahler Ricci solitons of complex dimension two, are established by Conlon-Deruelle-Sun in \cite{CDSclassificationExpShrKRS}. \vspace{2mm}

\textit{Strategy of the proof and structure of the paper.} Recall that an expanding gradient Ricci soliton $(M,g,u)$ is a Riemannian manifold $(M,g)$ together with a smooth function $u$ on $M$ such that
\begin{align*}
    \Ric + \Hess u + \frac{\varepsilon}{2} = 0
\end{align*}
for some constant $\varepsilon>0.$ The Ricci solitons constructed in this paper are multiple warped products on $(0,T) \times S^{d_1} \times M_2 \times \ldots \times M_r$ where the sphere smoothly collapses to a point as $t \to 0.$

In section \ref{SectionMultipleWarpedProductRS} we recall the Ricci soliton equations for multiple warped product manifolds and show completeness of the metrics, i.e. Theorem \ref{MainTheorem} (a), by establishing that the shape operator of the hypersurface $\{ t \} \times S^{d_1} \times M_2 \times \ldots \times M_r$ remains positive definite. This suffices due to \cite[Proposition 1.6]{WinkSolitonsViaEstimatesOnPotential}. The expanding Ricci solitons are parametrized by $(\Bar{f}_2, \ldots, \Bar{f}_r, C)$ where $\Bar{f}_i>0$ rescale the metric of the singular orbit, $\left(M_2 \times \ldots \times M_r, \sum_{i=2}^r \Bar{f}_i^2 g_i \right),$ and $C<0$ corresponds to the second derivative of the soliton potential $u$ at the singular orbit. 

In order to consider the limit trajectory as $C \to - \infty$ in section \ref{SectionSubsystem}, we desingularize the Ricci soliton equations in section \ref{SectionDesingularization} using a well-known coordinate change. Sections \ref{SectionAsymptoticsMetric} and \ref{SectionScalAtInifity} establish the asymptotic behavior of the metrics. In particular, we show that the expanding Ricci solitons are asymptotically conical in lemma \ref{AsymptoticLemma} and thus prove Theorem \ref{MainTheorem} (b). This relies on several ODE comparison results, in particular lemma \ref{LemmaAsymptotics}. We also compute the scalar curvature at infinity of the solitons in proposition \ref{ScalAtInfinity}. 

Section \ref{SectionSubsystem} studies the aforementioned limit trajectory as $C=(d_1+1) \Ddot{u}(0) \to - \infty$ which correspond to an invariant subsystem of the Ricci soliton ODE. One motivation to consider this limit is that the scalar curvature satisfies $R=-C-\varepsilon u - \Dot{u}^2 -(n+1) \frac{\varepsilon}{2}$ and thus the (non-geometric) limit corresponds (intuitively) to having infinite scalar curvature. Moreover, if the singular orbit consists only of Ricci flat manifolds, then the cone angle of the sphere factor must become arbitrarily small as $C \to - \infty$ to obtain arbitrarily large scalar curvature. The asymptotic behavior of the trajectory of the rescaled limit system quantifies this idea. 

In the final section \ref{SectionProofMainTheorem} we use this to show that there are also regular trajectories, i.e. trajectories corresponding to complete asymptotically conical expanding Ricci solitons, where the sphere factor of the link is (and remains) arbitrarily small. By considering the limit $C \to 0$, i.e. the Einstein trajectories of Theorem \ref{MainTheorem} (a), we show that similarly there are expanding Ricci solitons with spheres of arbitrarily large radii in the link. Combining these observations we show that we can find an aysmptotically conical expanding Ricci soliton with any given cone angle of the sphere factor in lemma \ref{Simga1Realized}. Theorem \ref{MainTheorem} (c) follows by suitably rescaling the Ricci flat factors of the link, i.e. by choosing $\Bar{f}_i >0$ appropriately. \vspace{2mm}

\textit{Acknowledgements.} MW thanks Eric Chen and Tristan Ozuch for bringing his attention to the problem of constructing expanding Ricci solitons on $\R^3 \times S^1$ with positive scalar curvature.

JN acknowledges support by the Alexander von Humboldt Foundation through Gustav Holzegel's Alexander von Humboldt Professorship endowed by the Federal Ministry of Education and Research. Both authors are funded by the Deutsche Forschungsgemeinschaft (DFG, German Research Foundation) under Germany's Excellence Strategy EXC 2044–390685587, Mathematics M\"unster: Dynamics–Geometry–Structure.

\section{Multiple warped product gradient Ricci solitons}
\label{SectionMultipleWarpedProductRS}

For $r \geq 2$ let $(M_i,g_i)_{i=2, \ldots, r}$ be Einstein manifolds with $\Ric(g_i)=\mu_i g_i$. Let $d_1 \geq 1$ and set $d_i = \dim M_i$ for $i=2, \ldots, r.$ On $(0,T) \times S^{d_1} \times M_2 \times \ldots \times M_r$
consider the metric
\begin{align*}
    dt^2 + \sum_{i=1}^r f_i^2(t) g_i,
\end{align*}
where $g_1$ denotes the round metric on $S^{d_1}$ of radius $1,$ so that $\mu_1=d_1-1.$ Note that the shape operator and the Ricci curvature of the hypersurface $\{ t \} \times S^{d_1} \times M_2 \times \ldots \times M_r$ are given by
\begin{align*}
    L_t  = \diag \left( \frac{\Dot{f}_1}{f_1} \id_{d_1}, \ldots, \frac{\Dot{f}_r}{f_r} \id_{d_r} \right) \ \text{ and } \
    r_t  = \diag \left( \frac{\mu_1}{f_1^2} \id_{d_1}, \ldots, \frac{\mu_r}{f_r^2} \id_{d_r} \right).
\end{align*}
In this case the gradient Ricci soliton equation with soliton potential $u=u(t)$ reduces to 
\begin{align*}
    \frac{d}{dt}  (- \Dot{u} + \tr(L)) & = - \tr(L^2) + \frac{\varepsilon}{2}, \\
    \frac{d}{dt}  L & = - (- \Dot{u} + \tr(L)) L + r + \frac{\varepsilon}{2} \id.
\end{align*}
Furthermore, the Ricci soliton satisfies Hamilton's \cite{HamiltonFormationOfSingularitiesRF} conservation law
\begin{align*}
    \Ddot{u} + (- \Dot{u} \tr(L) ) \Dot{u} = \tr(L^2) + \tr(r) + (n-1) \frac{\varepsilon}{2} - (- \Dot{u} + \tr(L))^2 =  C + \varepsilon u,
\end{align*}
where $n=\sum_{i=1}^r d_i$ is the dimension of the hypersurface.

By the work of Buzano \cite{BuzanoInitialValueGRS}, the metric extends smoothly to a Ricci soliton metric on $\R^{d_1+1} \times M_2 \times \ldots \times M_r$ if the warping functions $f_i$ satisfy the boundary conditions
\begin{align*}
    f_1(0)=1, \ \Dot{f}_1(0)=0, \ f_i(0) = \Bar{f}_i > 0 \ \text{ and } \ \Dot{f}_i(0) = 0
\end{align*}
for $i=2, \ldots, r.$ Furthermore, if we fix $u(0)=0,$ then the smoothness condition for the soliton potential is
\begin{align*}
    u(0)=0, \ \Dot{u}(0)=0, \ \Ddot{u}(0) = \frac{C}{d_1+1}.
\end{align*}
With this choice of initial conditions, $C$ is the same constant as in the conservation law. 

The ambient scalar curvature of the warped product is given by 
\begin{equation*}
    R = \tr(r) - \tr(L^2) - \tr(L)^2 - 2 \tr( \dot{L} ).
\end{equation*}
For Ricci solitons we thus obtain
\begin{align*}
    R & = - C - \varepsilon u - \dot{u}^2 - (n+1) \frac{\varepsilon}{2} \\
    & = - \tr(r) - \tr(L^2) + \tr(L)^2 - 2 \left( \Dot{u} \tr(L) + n \frac{\varepsilon}{2} \right)
\end{align*}
and in particular $R(0)=-C- (n+1) \frac{\varepsilon}{2}.$ By the work of Chen \cite{ChenStrongUniquenessRF}, a complete, non-Einstein expanding Ricci soliton satisfies $R+(n+1) \frac{\varepsilon}{2} > 0.$ In particular, we require $C<0$ as a necessary condition to obtain complete expanding Ricci solitons. Furthermore, Buzano-Dancer-Gallaugher-Wang \cite{BDGWExpandingSolitons} observed that if $C<0,$ then the conservation law implies that $u(t)<0,$ $\Dot{u}(t)<0$ and $\Ddot{u}(t)<0$ for $t>0,$ see also \cite[Proposition 1.2]{WinkSolitonsViaEstimatesOnPotential}. Furthermore, $C=0$ corresponds to Einstein metrics.

Note that the components of the shape operator satisfy
\begin{align*}
    \frac{d}{dt} \frac{\Dot{f}_i}{f_i} = - ( - \Dot{u} + \tr(L)) \frac{\Dot{f}_i}{f_i} + \frac{\mu_i}{f_i^2} + \frac{\varepsilon}{2}
\end{align*}
and thus $(d_1+1) \Ddot{f}_i(0) = \frac{\mu_i}{\Bar{f}_i} + \frac{\varepsilon}{2} \Bar{f}_i.$ 

For $i\geq 2$ choose $\Bar{f}_i >0$ such that $\frac{\mu_i}{\Bar{f}_i^2} + \frac{\varepsilon}{2}>0$. Then $\Ddot{f}_i(0)>0$ and therefore $\Dot{f}_i(t)>0$ for small $t>0.$ Note that $\Dot{f}_i(t)>0$ is preserved as long as $\frac{d}{dt} \frac{\Dot{f}_i}{f_i} \geq - ( - \Dot{u} + \tr(L)) \frac{\Dot{f}_i}{f_i}.$ For $\mu_i \geq 0$ this is immediate and for $\mu_i<0$ note that $\Dot{f}_i(t)>0$ also implies $\frac{\mu_i}{f_i(t)^2} + \frac{\varepsilon}{2} > \frac{\mu_i}{\Bar{f}_i^2} + \frac{\varepsilon}{2}>0.$ Therefore the shape operator remains positive definite and completeness of the metric follows as in \cite[Proposition 1.6]{WinkSolitonsViaEstimatesOnPotential}. This proves part (a) of Theorem \ref{MainTheorem}. \vspace{2mm}

In the following we will therefore only consider trajectories with 
\begin{align}
\label{RefinedInitialCondition}
    \Bar{F}_i= \frac{\mu_i}{\Bar{f}_i^2} + \frac{\varepsilon}{2}>0
\end{align}
for $i=2, \ldots, r.$ By completeness of the metric, these trajectories are defined for $t \in [0, \infty).$

\begin{remark}
\label{ProductFactorExpanders}
    \normalfont
    Note that for $\Bar{F}_i=0$ the warping function $f_i$ remains constant and $M_i$ splits off as a product factor.
\end{remark}

\begin{remark}
    \normalfont
    The above construction also applies to Ricci flat metrics and steady Ricci solitons provided the Einstein manifolds $(M_i,g_i)$ have positive scalar curvature. This recovers metrics constructed by B\"ohm \cite{BohmNonCompactComhomOneEinstein}, Dancer-Wang \cite{DWSteadySolitons} and Buzano-Dancer-Wang \cite{BDWSteadySolitons}. In the above approach it is also possible to include Ricci flat manifolds $(M_i,g_i).$ However, as in remark \ref{ProductFactorExpanders}, these split off as product factors. In fact, if $(M,g)$ is a steady Ricci soliton with potential $u$ and $(N,h)$ is Ricci flat, then the Riemannian product $(M \times N, g + h)$ is a steady Ricci soltion with soliton potential $u \circ \pi_M.$
\end{remark}

\section{Desingularization of the Ricci Soliton ODE}
\label{SectionDesingularization}

Set
\begin{align*}
    \mathcal{L} = \frac{1}{- \dot{u}+\tr(L)},\ \
    \frac{d}{ds} = \mathcal{L} \frac{d}{dt}, \ \
    X_i = \mathcal{L} \frac{\dot{f}_i}{f_i} \ \text{ and } \
    Y_i = \frac{\mathcal{L}}{f_i}
\end{align*}
and denote differentiation with respect to $s$ by ${}^{'}.$ Then, 
\begin{align*}
    \mathcal{L}^{'} & = \mathcal{L} \left( \sum_{j=1}^r d_j X_j^2 - \frac{\varepsilon}{2} \mathcal{L}^2 \right), \\
    X_i^{'} & = X_i \left( \sum_{j=1}^r d_j X_j^2 - \frac{\varepsilon}{2} \mathcal{L}^2 - 1 \right) + \mu_i Y_i^2 + \frac{\varepsilon}{2} \mathcal{L}^2, \\
    Y_i^{'} & = Y_i \left( \sum_{j=1}^r d_j X_j^2 - \frac{\varepsilon}{2} \mathcal{L}^2 - X_i \right)
\end{align*}
for $i=1, \ldots, r.$ Furthermore, let 
\begin{align*}
    \mathcal{S}_1 & = \sum_{i=1}^r d_i X_i^2 + \sum_{i=1}^r d_i \mu_i Y_i^2 + (n-1) \frac{\varepsilon}{2} \mathcal{L}^2 - 1, \\
    \mathcal{S}_2 & = \sum_{i=1}^r d_i X_i -1
\end{align*}
and observe that 
\begin{align*}
    \frac{1}{2} \mathcal{S}_1^{'} & = \left( \sum_{i=1}^r d_i X_i^2 - \frac{\varepsilon}{2} \mathcal{L}^2 \right) \mathcal{S}_1 + \frac{\varepsilon}{2} \mathcal{L}^2 \mathcal{S}_2, \\
    \mathcal{S}_2^{'} & = \mathcal{S}_1 +  \left( \sum_{i=1}^r d_i X_i^2 - \frac{\varepsilon}{2} \mathcal{L}^2 -1 \right) \mathcal{S}_2.
\end{align*}

Note that, by the conservation law, $\mathcal{S}_1 = ( C + \varepsilon u) \mathcal{L}^2$ and $\mathcal{S}_2 = \dot{u} \mathcal{L}$. In fact, $\frac{d}{ds} \frac{\mathcal{S}_1}{\mathcal{L}^2} = \varepsilon \mathcal{S}_2$ and $\mathcal{S}_1 - \mathcal{S}_2 = \Ddot{u} \mathcal{L}^2.$

Solutions to the Ricci soliton equation satisfying the smoothness conditions of section \ref{SectionMultipleWarpedProductRS} correspond to trajectories in the unstable manifold of the stationary point
\begin{align}
\label{RescaledSmoothnessConditions}
    \mathcal{L}=0, \ X_1=Y_1=\frac{1}{d_1}, \ X_i=Y_i=0
\end{align}
for $i=2, \ldots, r.$ 

In the following we are only going to be interested in trajectories that are induced by solutions satisfying the initial conditions \eqref{RefinedInitialCondition} and $C \leq 0.$ Note that, due the conservation law, these trajectories are contained in the locus $\{ \mathcal{S}_1 < 0 \} \cap \{ \mathcal{S}_2 < 0 \}$ for $C<0$ and in the locus $\{ \mathcal{S}_1 = 0 \} \cap \{ \mathcal{S}_2 = 0 \}$ for $C=0.$ Furthermore they satisfy $\mathcal{L}, X_i, Y_i > 0$ as well as $\lim_{s \to - \infty} \mu_i \frac{Y_i^2}{\mathcal{L}^2} + \frac{\varepsilon}{2} = \Bar{F}_i >0.$ 

If $\mu_i<0,$ then $\frac{d}{ds} \frac{Y_i}{\mathcal{L}} = - \frac{Y_i}{\mathcal{L}} X_i \leq 0$ implies that $-\mu_i Y_i^2 < \left( \frac{\varepsilon}{2} - \Bar{F}_i \right) \mathcal{L}^2$ and the conservation law $\mathcal{S}_1 \leq 0$ shows that $\mathcal{L}$, all $Y_i$ with $\mu_i \neq 0,$ and all $X_i$ are bounded. Furthermore, for all $i,$ the ODE for $Y_i$ thus shows that $Y_i$ cannot blow up in finite time. In particular, solutions are defined for all $s \in \R.$ \vspace{2mm}

To obtain a formula for the scalar curvature $R$ in the new coordinate system we set 
\begin{align*}
    \mathcal{R} = R \mathcal{L}^2.
\end{align*}
From $R =  - \tr(r) - \tr(L^2) + \tr(L)^2 - 2 \left( \Dot{u} \tr(L) + n \frac{\varepsilon}{2} \right)$ it follows that 
\begin{align*}
    \mathcal{R} 
    & \ = 2 \sum_{i=1}^r d_i X_i - \sum_{i=1}^r d_i X_i^2 - \left( \sum_{i=1}^r d_i X_i \right)^2 - \sum_{i=1}^r d_i \mu_i Y_i^2 - n \varepsilon \mathcal{L}^2 \\
    & \ = - \left( \mathcal{S}_1 + \mathcal{S}_2^2 + (n+1) \frac{\varepsilon}{2} \mathcal{L}^2 \right)
\end{align*}
and it is straightforward to compute that
\begin{align*}
    \frac{1}{2} \frac{d}{ds} \mathcal{R}= \left( \sum_{i=1}^r d_i X_i^2 - \frac{\varepsilon}{2} \mathcal{L}^2 \right) \mathcal{R} + \left( \mathcal{S}_2 - \mathcal{S}_1 - \frac{\varepsilon}{2} \mathcal{L}^2 \right) \mathcal{S}_2.
\end{align*}

For applications in section \ref{SectionScalAtInifity} we note the following. 

\begin{proposition}
\label{OriginAttractor}
   Suppose that $\mu_i \geq 0.$ Then the origin is a stable attractor. 
\end{proposition}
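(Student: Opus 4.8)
The plan is to treat $\mathcal{L}=X_i=Y_i=0$ as a (highly degenerate) equilibrium of the desingularised system and to exploit the a priori structure of the geometrically relevant trajectories rather than a naive linearisation. First I would record that this is a stationary point and compute the linearisation there: since $\mathcal{L}'$ and the $Y_i'$ vanish to second order at the origin while $X_i'=-X_i+\mu_iY_i^2+\tfrac{\varepsilon}{2}\mathcal{L}^2+\ldots$, the Jacobian has eigenvalue $-1$ of multiplicity $r$ (the $X$-directions) and eigenvalue $0$ of multiplicity $r+1$ (the $\mathcal{L}$- and $Y$-directions). Hence the equilibrium is non-hyperbolic, and the assertion cannot be read off the linearisation alone.

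The structural input that resolves this is already present in the excerpt: on the trajectories under consideration one has $\tfrac{d}{ds}\tfrac{Y_i}{\mathcal{L}}=-\tfrac{Y_i}{\mathcal{L}}X_i\le 0$, so $Y_i\le c_i\mathcal{L}$ for constants $c_i>0$, and $X_i>0$ is preserved since $X_i'\ge 0$ whenever $X_i=0$. Near the origin this forces $Y_i^2\le c_i^2\mathcal{L}^2\ll\mathcal{L}$, i.e. all $r+1$ central directions are slaved to the single variable $\mathcal{L}$; it therefore suffices to show $\mathcal{L}\to 0$ while $X_i$ stays comparably small.

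To make this precise I would exhibit, for $A>0$ large and $\eta>0$ small, the forward-invariant neighbourhood $\mathcal{R}_\eta=\{\,0<\mathcal{L}\le\eta,\ 0<X_i\le A\mathcal{L}^2,\ 0<Y_i\le c_i\mathcal{L}\ \text{ for all }i\,\}$. Inward-pointing of the vector field on the $Y_i$-face is immediate from $\tfrac{d}{ds}(Y_i-c_i\mathcal{L})=-c_i\mathcal{L}X_i\le 0$; on the $\mathcal{L}$-face one uses $\sum_j d_jX_j^2=O(\mathcal{L}^4)\ll\tfrac{\varepsilon}{2}\mathcal{L}^2$; and on the $X_i$-face one combines $\sum_j d_jX_j^2-\tfrac{\varepsilon}{2}\mathcal{L}^2-1\le-\tfrac12$ near the origin with the bound $\mu_iY_i^2+\tfrac{\varepsilon}{2}\mathcal{L}^2\le(\mu_ic_i^2+\tfrac{\varepsilon}{2})\mathcal{L}^2$ on the forcing term. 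This last step is exactly where the hypothesis $\mu_i\ge 0$ enters: it keeps the forcing in the $X_i$-equation under control, whereas for $\mu_i<0$ this term could drive $X_i$, and then $Y_i$, away from zero. (Equivalently: via a centre manifold reduction the slaving is $X_i=\mu_iY_i^2+\tfrac{\varepsilon}{2}\mathcal{L}^2+O(3)\ge 0$, which makes the reduced $\mathcal{L}$- and $Y_i$-equations inward-pointing in the same region.) Inside $\mathcal{R}_\eta$ one has $\mathcal{L}'=\mathcal{L}\big(\sum_j d_jX_j^2-\tfrac{\varepsilon}{2}\mathcal{L}^2\big)\le-\tfrac{\varepsilon}{4}\mathcal{L}^3$, hence $\mathcal{L}(s)\to 0$ (at rate $O(s^{-1/2})$), so $Y_i\le c_i\mathcal{L}\to 0$ and $X_i\le A\mathcal{L}^2\to 0$; thus every trajectory entering $\mathcal{R}_\eta$ converges to the origin, and since $\mathcal{R}_\eta$ is forward invariant and shrinks to $\{0\}$ as $\eta\to 0$, the origin is a stable attractor.

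I expect the main obstacle to be precisely the degeneracy of the equilibrium — more than half the eigenvalues vanish — so the content lies not in linear algebra but in (i) observing that the bound $Y_i\le c_i\mathcal{L}$ collapses all central directions onto $\mathcal{L}$ and (ii) the sign computation on the $X_i$-face, which is where positivity of the Einstein constants is essential. A minor point to dispatch is the locus $\mathcal{L}=0$: there $Y_i\le c_i\mathcal{L}$ forces $Y_i=0$ and then $X_i=0$, so $\mathcal{R}_\eta$ meets $\{\mathcal{L}=0\}$ only at the origin and needs no separate treatment.
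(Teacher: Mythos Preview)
Your approach differs from the paper's: the paper linearises, passes to the centre manifold, and verifies that $\sum_{i=1}^r Y_i^2 + \tfrac{\varepsilon}{2}\mathcal{L}^2$ is a Lyapunov function for the reduced flow there (citing Dancer--Wang and Carr), whereas you try to bypass centre manifold theory via an explicit trapping region.

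The trapping-region argument, as written, has a genuine gap: your set $\mathcal{R}_\eta = \{0<\mathcal{L}\le\eta,\ 0<X_i\le A\mathcal{L}^2,\ 0<Y_i\le c_i\mathcal{L}\}$ is \emph{not} a neighbourhood of the origin, even relative to the positive orthant. For instance, the point with $\mathcal{L}=\delta^2$, $X_1=\delta$, and all other coordinates zero lies within distance $2\delta$ of the origin yet has $X_1/\mathcal{L}^2=\delta^{-3}$, so it lies outside $\mathcal{R}_\eta$ for every fixed $A$ once $\delta$ is small. Hence ``forward invariant and shrinks to $\{0\}$ as $\eta\to 0$'' does not by itself yield Lyapunov stability or the attractor property. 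What is missing is a transient analysis showing that any trajectory starting near the origin but outside $\mathcal{R}_\eta$ enters $\mathcal{R}_\eta$ while remaining close to the origin; this requires controlling the fast exponential decay of the hyperbolic $X_i$-modes against the slow drift of the central $(\mathcal{L},Y_i)$-modes, which is exactly what the centre manifold theorem packages. You note the slaving $X_i=\mu_iY_i^2+\tfrac{\varepsilon}{2}\mathcal{L}^2+O(3)$ parenthetically but do not use it to close this gap. Once one accepts the centre manifold reduction, the paper's Lyapunov function is the efficient way to finish; your inward-pointing computations then amount to reproving that it decreases along the reduced flow.
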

\begin{proof}
    The linearization of the Ricci soliton ODE at the origin shows that all eigenvalues are nonpositive and there is a center manifold. As in \cite[Proof of Proposition 3.11]{DWExpandingSolitons} one shows that $\sum_{i=1}^r Y_i^2 + \frac{\varepsilon}{2} \mathcal{L}^2$ is a Lyapunov function for the flow on the center manifold near the origin. In particular, the origin is a sink for the flow, see also \cite[Theorem 2]{CarrCenterManifoldTheory}.
\end{proof}

\section{Approximate asymptotics of the metrics}
\label{SectionAsymptoticsMetric}

In the case of Einstein metrics with negative scalar curvature, the condition $\sum_{i=1}^r d_i X_i =1$ implies $\sum_{i=1}^r d_i X_i^2 \geq 1/n$ and the ODE for $\mathcal{L}$ shows that $\mathcal{L}$ is bounded away from zero for $s>s_0.$ As $\mathcal{L}$ is moreover bounded and $Y_i / \mathcal{L}$ is decreasing, all $Y_i$ are hence bounded. Thus, the $\omega$-limit set of the ODE is connected, compact, non-empty and invariant under the flow. Furthermore, $\frac{d}{ds} \frac{Y_i}{\mathcal{L}} = - \frac{Y_i}{\mathcal{L}} X_i$ also implies that $X_i \cdot Y_i = 0$ on the $\omega$-limit set. However, the ODE for $X_i$ shows that $X_i=0$ is impossible on the $\omega$-limit set since $\mu_i Y_i^2 + \frac{\varepsilon}{2} \mathcal{L}^2$ is bounded away from zero for $s > s_0.$ For $\mu_i<0$ this follows by the choice of $\Bar{f}_i>0$ in \eqref{RefinedInitialCondition}. Therefore, $Y_i \to 0$ as $s \to \infty.$

As $\sum_{j=1}^r d_j X_j^2 - \frac{\varepsilon}{2}\mathcal{L}^2 -1$ is negative and bounded away from zero for $s > s_0$, the ODE
\begin{align*}
    (X_k-X_l)^{'} = (X_k-X_l) \left( \sum_{j=1}^r d_j X_j^2 - \frac{\varepsilon}{2}\mathcal{L}^2 -1 \right) + \mu_k Y_k^2 - \mu_l Y_l^2
\end{align*}
shows by comparison that $X_k-X_l \to 0$ as $s \to \infty$ for all $k,l$. Thus $X_i \to  \frac{1}{n}$ and then also $\frac{\varepsilon}{2}\mathcal{L}^2 \to \frac{1}{n}$ as $s \to \infty.$ In particular, $\frac{\Dot{f}_i}{f_i} = \frac{X_i}{\mathcal{L}} \to \sqrt{n\frac{\varepsilon}{2}}$ as $t \to \infty.$ \vspace{2mm}

In the case of expanding Ricci solitons, the work of Buzano-Dancer-Gallaugher-Wang \cite{BDGWExpandingSolitons} shows that $-\frac{\Dot{u}}{t} \to \frac{\varepsilon}{2}$ as $t \to \infty$ and moreover there is $t_0>0$ such that $| \tr(L) | < \sqrt{n \frac{\varepsilon}{2}}$ for $t>t_0.$ This implies that $\mathcal{L}, X_i, Y_i \to 0$ as $s \to \infty.$ As $X_i \geq 0$ and $\mu_i \frac{Y_i^2}{\mathcal{L}^2}+\frac{\varepsilon}{2} >0$ is bounded away from zero, one can now proceed as in \cite{DWExpandingSolitons}, see also remark \ref{AsymptoticsXL2YL}, to deduce that $\frac{X_i}{\mathcal{L}^2} \to \frac{\varepsilon}{2},$ $\frac{Y_i}{\mathcal{L}} \to 0$ as $s \to \infty$ and to conclude that $\frac{\varepsilon}{2} \mathcal{L} \cdot t \to 1,$ $\frac{\Dot{f}_i}{f_i} \cdot t \to  1$ as $t \to \infty.$

\section{Scalar curvature at infinity}
\label{SectionScalAtInifity}

From now on we restrict to expanding Ricci solitons on $\R^{d_1+1} \times M_2 \times \ldots \times M_r$ where $d_1 \geq 2$ and $(M_i,g_i)$ are Einstein with nonnegative scalar curvature for $i=2, \ldots, r$. In particular, 
\begin{align*}
    \mu_1 = d_1 -1>0 \ \text{ and } \ \mu_i \geq 0  \text{ for }  i=2, \ldots, r.
\end{align*}

To distinguish between trajectories corresponding to expanding Ricci solitons and Einstein metrics we make the following definition. 

\begin{definition}
A trajectory in the unstable manifold of \eqref{RescaledSmoothnessConditions} with $\mathcal{L}>0$, $X_i>0,$ $Y_i>0$ for $i=1, \ldots, r$ is called {\em regular} if $\mathcal{S}_1<0$ and $\mathcal{S}_2<0$ respectively {\em Einstein} if $\mathcal{S}_1=\mathcal{S}_2=0.$
\end{definition}
Recall from section \ref{SectionDesingularization} that the loci $\{ \mathcal{S}_1 < 0 \} \cap \{ \mathcal{S}_2 < 0 \}$ and $\{ \mathcal{S}_1 = 0 \} \cap \{ \mathcal{S}_2 = 0 \}$ are preserved by the ODE as $\varepsilon > 0.$

\begin{remark}
\normalfont
    The trajectory corresponding to $(\Bar{f}_2, \ldots, \Bar{f}_r, C)$ is regular if and only if $\Bar{f}_i >0$ for $i=2, \ldots, r$ and $C < 0$.

    Indeed, in section \ref{SectionDesingularization} we established that trajectories induced by the smoothness conditions of section \ref{SectionMultipleWarpedProductRS} are regular for $\Bar{f}_i>0$ and $C<0$, or Einstein if $\Bar{f}_i>0$ and $C=0.$

    The converse follows from the linearization at \eqref{RescaledSmoothnessConditions}, cf. \cite[Section 2]{DWExpandingSolitons}. Note that for $d_1 \geq 2$ the fixed point \eqref{RescaledSmoothnessConditions} is hyperbolic and that we recover $\Bar{f}_i=f_i(0)$ and $C=(d_1+1) \Ddot{u}(0)$ via
    \begin{align*}
        \Bar{f}_i = \lim_{s \to - \infty} \frac{Y_i}{\mathcal{L}} \ \text{ and } C= \lim_{s \to - \infty} \frac{\mathcal{S}_1}{\mathcal{L}^2}.
    \end{align*}
\end{remark}

To study the precise asymptotic behavior of the trajectories, we set
\begin{definition}
    For $i=1, \ldots, r$ define $\sigma_i$ by
    \begin{align*}
        \sigma_i^{-1} = \lim_{t \to \infty} \Dot{f}_i = \lim_{s \to \infty} \frac{X_i}{Y_i},
    \end{align*}
where by convention we set $\sigma_i=0$ if the limit diverges to infinity.
\end{definition}

Along Einstein trajectories we have $\sigma_i=0$ according to section \ref{SectionAsymptoticsMetric}. To prove that the $\sigma_i$ are well-defined also along regular trajectories we use following lemma. 
\begin{lemma}
\label{LemmaAsymptotics}
    For $i=1,2$ let $c_i \colon \R \to \R$ be smooth functions and suppose that $c_i(s) \to c_i^{*} > 0$ as $s \to \infty.$ Suppose that $f \colon \R \to \R$ satisfies $ f{'} = - c_1 f + c_2.$ 
    
    Then $f$ converges to $\frac{c_2^{*}}{c_1^{*}}$ as $s \to \infty.$
\end{lemma}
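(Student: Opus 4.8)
The plan is to prove convergence directly by a Gronwall-type comparison argument, treating $g := f - \frac{c_2^*}{c_1^*}$ and showing $g(s) \to 0$. Substituting, one computes $g' = -c_1 g + \bigl(c_2 - c_1 \frac{c_2^*}{c_1^*}\bigr)$, and the inhomogeneous term $h(s) := c_2(s) - c_1(s)\frac{c_2^*}{c_1^*}$ tends to $c_2^* - c_1^* \frac{c_2^*}{c_1^*} = 0$ as $s \to \infty$. So it suffices to show: if $g' = -c_1 g + h$ with $c_1 \to c_1^* > 0$ and $h \to 0$, then $g \to 0$.

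For this, first fix $\eta > 0$ and choose $s_0$ so that $c_1(s) \ge c_1^*/2 =: a > 0$ and $|h(s)| < \eta$ for all $s \ge s_0$. The variation-of-constants formula gives, for $s \ge s_0$,
\begin{align*}
    g(s) = g(s_0)\, e^{-\int_{s_0}^{s} c_1} + \int_{s_0}^{s} e^{-\int_{\tau}^{s} c_1}\, h(\tau)\, d\tau.
\end{align*}
The first term is bounded by $|g(s_0)|\, e^{-a(s - s_0)} \to 0$. For the second term, since $\int_\tau^s c_1 \ge a(s - \tau)$, its absolute value is at most $\eta \int_{s_0}^{s} e^{-a(s-\tau)}\, d\tau \le \eta / a$. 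Hence $\limsup_{s\to\infty} |g(s)| \le \eta/a = 2\eta/c_1^*$, and letting $\eta \to 0$ gives $g(s) \to 0$, i.e.\ $f(s) \to \frac{c_2^*}{c_1^*}$.

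Alternatively, and perhaps more in the spirit of the ODE-comparison lemmas used elsewhere in the paper, one can argue without the integral formula: given $\eta > 0$ pick $s_0$ as above, and observe that on the region $g > \eta/a$ one has $g' = -c_1 g + h < -a g + \eta < 0$, so $g$ is strictly decreasing there; a symmetric argument shows $g$ is increasing wherever $g < -\eta/a$. Thus $g$ eventually enters and cannot leave the interval $[-\eta/a, \eta/a]$, and again letting $\eta \to 0$ yields the claim. I do not anticipate a genuine obstacle here; the only point requiring a little care is making the eventual sign conditions on $c_1$ and $h$ precise (they only hold for $s$ large, not globally), which both formulations handle by restricting to $s \ge s_0$ and absorbing the transient behavior on $[s_0, \infty)$ into an exponentially decaying term or a monotonicity trap.
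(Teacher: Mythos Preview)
Your proof is correct. The paper itself does not give a proof of this lemma but simply refers to \cite[Lemma 3.13]{DWExpandingSolitons}; your variation-of-constants argument (and the alternative monotonicity-trap version) is exactly the standard way to establish such a statement and is what one finds behind that citation.
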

The lemma follows as in \cite[Lemma 3.13]{DWExpandingSolitons}. Recall from section \ref{SectionAsymptoticsMetric} that regular trajectories satisfy  
\begin{align*}
 \mathcal{L}, X_i, Y_i \to 0, \ \frac{X_i}{\mathcal{L}^2} \to \frac{\varepsilon}{2} \ \text{ and } \ \frac{Y_i}{\mathcal{L}} \to 0
\end{align*}
for $i=1, \ldots, r$ as $s \to \infty.$ 

\begin{remark}
\label{AsymptoticsXL2YL}
    \normalfont
In fact, following \cite{DWExpandingSolitons}, in order to prove the last two statements, one first observes that $\frac{d}{ds} \frac{Y_i}{\mathcal{L}} = - \frac{Y_i}{\mathcal{L}} X_i \leq 0$ and thus $\frac{Y_i}{\mathcal{L}}$ converges. Then one applies lemma \ref{LemmaAsymptotics} to 
\begin{align*}
    \frac{d}{ds} \frac{X_i}{\mathcal{L}^2} = \frac{X_i}{\mathcal{L}^2} \left( - \sum_{j=1}^r d_j X_j^2 + \frac{\varepsilon}{2}\mathcal{L}^2 - 1 \right) + \mu_i \frac{Y_i^2}{\mathcal{L}^2} + \frac{\varepsilon}{2}
\end{align*}
to deduce convergence of $\frac{X_i}{\mathcal{L}^2}$. Finally one establishes $\frac{Y_i}{\mathcal{L}} \to 0$ by considering $\frac{Y_i'}{\mathcal{L}'}$ and invoking L'H\^opital's rule. This implies $\frac{X_i}{\mathcal{L}^2} \to \frac{\varepsilon}{2}.$
\end{remark}

\begin{lemma}
    \label{AsymptoticLemma}
    Along any regular trajectory, $\sigma_i$ exist and $\sigma_i \in (0,\infty)$ for $i=1, \ldots, r.$ Moreover, we have the refined asymptotics
    \begin{align*}
        \frac{X_i-\frac{\varepsilon}{2}\mathcal{L}^2}{\mathcal{L}^4} \to (\frac{\varepsilon}{2})^2 \left(\mu_i \sigma_i^2 + 1 \right)
    \end{align*}
    for $i=1, \ldots, r$ as $s \to \infty.$
\end{lemma}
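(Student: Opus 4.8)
The plan is to bootstrap from the first-order asymptotics recalled before the lemma ($\mathcal{L}, X_i, Y_i \to 0$, $X_i/\mathcal{L}^2 \to \frac\varepsilon2$, $Y_i/\mathcal{L}\to0$) up to the stated $\mathcal{L}^4$-order expansion, and to extract $\sigma_i$ along the way. First I would establish that $\sigma_i \in (0,\infty)$. Since $X_i/Y_i = (X_i/\mathcal{L}^2)\cdot(\mathcal{L}/Y_i)$ and the first factor tends to $\frac\varepsilon2 > 0$, it suffices to show $Y_i/\mathcal{L}$ tends to a \emph{positive} finite limit; monotonicity (from $\frac{d}{ds}\frac{Y_i}{\mathcal{L}} = -\frac{Y_i}{\mathcal{L}}X_i \le 0$) already gives a finite limit in $[0,\infty)$, but remark \ref{AsymptoticsXL2YL} only yields $Y_i/\mathcal{L}\to 0$. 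So I would instead work with $X_i/Y_i$ directly. Computing $\frac{d}{ds}\log(X_i/Y_i) = \frac{X_i'}{X_i} - \frac{Y_i'}{Y_i}$ from the ODEs in section \ref{SectionDesingularization} gives, after the $X_i$-term expansion, something like
\begin{align*}
  \frac{d}{ds}\frac{X_i}{Y_i} = \frac{X_i}{Y_i}\left( X_i - 1 + \frac{\mu_i Y_i^2 + \frac\varepsilon2\mathcal{L}^2}{X_i} \right) + (\text{lower order}),
\end{align*}
and since $X_i\to0$, $Y_i\to0$, $\mathcal{L}\to0$ with $\mu_i Y_i^2 + \frac\varepsilon2\mathcal{L}^2 \sim \frac\varepsilon2\mathcal{L}^2 \sim X_i$ (using $X_i/\mathcal{L}^2\to\frac\varepsilon2$), the bracket tends to $0$; a Grönwall/comparison argument on $\frac{d}{ds}\log(X_i/Y_i)$ with an integrable-in-the-right-sense right-hand side would show $X_i/Y_i$ converges, and positivity of the limit follows because $X_i/Y_i$ is bounded below away from zero (e.g. by showing it cannot decrease to $0$, using that $\mu_iY_i^2+\frac\varepsilon2\mathcal{L}^2 > 0$ persists, cf. the choice \eqref{RefinedInitialCondition}).

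Next, for the refined asymptotics, I would introduce $Z_i := \dfrac{X_i - \frac\varepsilon2\mathcal{L}^2}{\mathcal{L}^4}$ and derive a linear ODE for it of the form $Z_i' = -c_1 Z_i + c_2$ so that lemma \ref{LemmaAsymptotics} applies. Using $X_i' = X_i(\sum_j d_jX_j^2 - \frac\varepsilon2\mathcal{L}^2 - 1) + \mu_iY_i^2 + \frac\varepsilon2\mathcal{L}^2$ and $\mathcal{L}' = \mathcal{L}(\sum_j d_jX_j^2 - \frac\varepsilon2\mathcal{L}^2)$, one computes $(X_i - \frac\varepsilon2\mathcal{L}^2)' = (X_i-\frac\varepsilon2\mathcal{L}^2)(\sum_j d_jX_j^2 - \frac\varepsilon2\mathcal{L}^2) - X_i + \mu_iY_i^2 + \frac\varepsilon2\mathcal{L}^2$, and since $-X_i + \frac\varepsilon2\mathcal{L}^2 = -(X_i - \frac\varepsilon2\mathcal{L}^2)$, this simplifies to $(X_i - \frac\varepsilon2\mathcal{L}^2)' = (X_i-\frac\varepsilon2\mathcal{L}^2)(\sum_j d_jX_j^2 - \frac\varepsilon2\mathcal{L}^2 - 1) + \mu_iY_i^2$. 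Dividing by $\mathcal{L}^4$ and using $(\mathcal{L}^4)' = 4\mathcal{L}^4(\sum_j d_jX_j^2 - \frac\varepsilon2\mathcal{L}^2)$ yields
\begin{align*}
  Z_i' = Z_i\left( -3\sum_j d_jX_j^2 + 3\cdot\tfrac\varepsilon2\mathcal{L}^2 - 1 \right) + \mu_i\frac{Y_i^2}{\mathcal{L}^4}.
\end{align*}
Here the coefficient $c_1 = 3\sum_j d_jX_j^2 - 3\frac\varepsilon2\mathcal{L}^2 + 1 \to 1 > 0$ since all $X_j,\mathcal{L}\to0$, and the inhomogeneous term is $\mu_i Y_i^2/\mathcal{L}^4 = \mu_i (Y_i/\mathcal{L})^2 \cdot \mathcal{L}^{-2}$, which I need to show tends to $\mu_i\sigma_i^2\cdot\frac\varepsilon2$ — but $(Y_i/\mathcal{L})^2 = (X_i/Y_i)^{-2}(X_i/\mathcal{L})^2$ wait, more directly $Y_i^2/\mathcal{L}^4 = (Y_i/\mathcal{L})^2/\mathcal{L}^2$, and $(Y_i/\mathcal{L})^2/\mathcal{L}^2 = (Y_i/X_i)^2 (X_i/\mathcal{L}^2)^2 \to \sigma_i^2 \cdot (\frac\varepsilon2)^2$, so $c_2 = \mu_i Y_i^2/\mathcal{L}^4 \to \mu_i\sigma_i^2(\frac\varepsilon2)^2 =: c_2^*$. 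Lemma \ref{LemmaAsymptotics} then gives $Z_i \to c_2^*/c_1^* = \mu_i\sigma_i^2(\frac\varepsilon2)^2$. But the claimed limit is $(\frac\varepsilon2)^2(\mu_i\sigma_i^2 + 1)$ — the extra $+1$ — so there must be an additional contribution I am missing in $c_2$; I expect it comes from the fact that $\sum_j d_jX_j^2$ is not negligible at order $\mathcal{L}^4$: indeed $X_j^2 = (\frac\varepsilon2\mathcal{L}^2 + O(\mathcal{L}^4))^2 = (\frac\varepsilon2)^2\mathcal{L}^4 + O(\mathcal{L}^6)$, so $-3\sum_j d_jX_j^2$ must be kept when comparing against $Z_i\mathcal{L}^4$... no — that term is already in $c_1$ and vanishes in the limit. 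Let me reconsider: the resolution is that in deriving the ODE for $X_i - \frac\varepsilon2\mathcal{L}^2$ I should double-check the $\mathcal{L}'$ contribution, since $\frac\varepsilon2(\mathcal{L}^2)' = \varepsilon\mathcal{L}^2(\sum_j d_jX_j^2 - \frac\varepsilon2\mathcal{L}^2) = X_i(\sum_j d_jX_j^2-\frac\varepsilon2\mathcal{L}^2) + O(\mathcal{L}^4)\cdot(\ldots)$, and the discrepancy at order $\mathcal{L}^4$ between $\frac\varepsilon2(\mathcal{L}^2)'$ and $X_i(\sum_j d_jX_j^2-\frac\varepsilon2\mathcal{L}^2)$ contributes precisely an extra $+(\frac\varepsilon2)^2\mathcal{L}^4$ to $c_2$.

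\textbf{Main obstacle.} The bookkeeping in the previous paragraph is the crux: I must be scrupulous about which sub-leading terms in the $X_j$ and $\mathcal{L}$ expansions feed into $c_2$ at order exactly $\mathcal{L}^4$, since the stated answer $(\frac\varepsilon2)^2(\mu_i\sigma_i^2+1)$ has a $+1$ that does \emph{not} come from the obvious inhomogeneous term $\mu_iY_i^2/\mathcal{L}^4$ alone. Concretely, writing $X_j = \frac\varepsilon2\mathcal{L}^2 + \mathcal{L}^4 Z_j$, one has $\sum_j d_jX_j^2 = (\sum_j d_j)(\frac\varepsilon2)^2\mathcal{L}^4 + O(\mathcal{L}^6)$ and $\frac\varepsilon2\mathcal{L}^2\cdot\frac\varepsilon2\mathcal{L}^2 = (\frac\varepsilon2)^2\mathcal{L}^4$; tracking the difference between $\frac\varepsilon2(\mathcal{L}^2)' = \varepsilon\mathcal{L}^2\sum_j d_jX_j^2 - (\frac\varepsilon2)^2\varepsilon\mathcal{L}^4$ and the term $X_i\cdot(\sum_j d_jX_j^2 - \frac\varepsilon2\mathcal{L}^2)$ appearing in $X_i'$, one should land on an inhomogeneous term of the form $\mu_i Y_i^2/\mathcal{L}^4 + (\frac\varepsilon2)^3\mathcal{L}^2\cdot(\text{stuff})/\mathcal{L}^4$ that tends to $\mu_i\sigma_i^2(\frac\varepsilon2)^2 + (\frac\varepsilon2)^2$. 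Once $c_1^* = 1$ and $c_2^* = (\frac\varepsilon2)^2(\mu_i\sigma_i^2 + 1)$ are pinned down, lemma \ref{LemmaAsymptotics} closes the argument immediately, and the existence of $\sigma_i\in(0,\infty)$ from the first paragraph guarantees $c_2$ actually converges so that the lemma is applicable. I would also remark that along Einstein trajectories $\sigma_i = 0$ (already noted), so the $\mathcal{L}^4$-asymptotics there degenerate consistently to $(\frac\varepsilon2)^2$, but since the lemma is stated only for regular trajectories this is just a sanity check.
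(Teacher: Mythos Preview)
Your computation for the refined asymptotics via $Z_i:=(X_i-\frac{\varepsilon}{2}\mathcal{L}^2)/\mathcal{L}^4$ and lemma \ref{LemmaAsymptotics} is exactly the paper's route. The slip is in $(X_i-\frac{\varepsilon}{2}\mathcal{L}^2)'$: since $(\frac{\varepsilon}{2}\mathcal{L}^2)'=\varepsilon\mathcal{L}^2(\sum_j d_jX_j^2-\frac{\varepsilon}{2}\mathcal{L}^2)$, not $\frac{\varepsilon}{2}\mathcal{L}^2(\cdots)$, there is an extra inhomogeneous term $-\frac{\varepsilon}{2}\mathcal{L}^2(\sum_j d_jX_j^2-\frac{\varepsilon}{2}\mathcal{L}^2)$, which after dividing by $\mathcal{L}^4$ becomes $\frac{\varepsilon}{2}\bigl(\frac{\varepsilon}{2}-\sum_j d_j(X_j/\mathcal{L})^2\bigr)\to(\frac{\varepsilon}{2})^2$. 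That is precisely the missing $+1$, and you essentially locate it in your ``Main obstacle'' paragraph, so this part is fine.

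The genuine gap is your argument that $\sigma_i$ exists and is positive. You propose a Gr\"onwall bound on $\frac{d}{ds}\log(X_i/Y_i)=X_i-1+(\mu_iY_i^2+\frac{\varepsilon}{2}\mathcal{L}^2)/X_i$, but this is \emph{not} integrable in $s$ without further input: each of $X_i$, $\mu_iY_i^2/X_i$, and $(\frac{\varepsilon}{2}\mathcal{L}^2-X_i)/X_i$ is individually of order $\mathcal{L}^2$, and $\int\mathcal{L}^2\,ds=\int\mathcal{L}\,dt\sim\int\frac{2}{\varepsilon t}\,dt$ diverges. The three $O(\mathcal{L}^2)$ pieces do cancel at leading order, but seeing that cancellation already requires $Z_i\to(\frac{\varepsilon}{2})^2(\mu_i\sigma_i^2+1)$, so the reasoning is circular. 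Your positivity claim (``it cannot decrease to $0$, using that $\mu_iY_i^2+\frac{\varepsilon}{2}\mathcal{L}^2>0$'') is also not a proof, since that term tends to zero.

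The paper sidesteps this by working with $Y_i/\mathcal{L}^2$ rather than $X_i/Y_i$. Its ODE is $\frac{d}{ds}\frac{Y_i}{\mathcal{L}^2}=\frac{Y_i}{\mathcal{L}^2}\bigl(-\sum_j d_jX_j^2+\frac{\varepsilon}{2}\mathcal{L}^2-X_i\bigr)$; once one notes (from the $Z_i$-ODE with $\mu_i\ge 0$) that eventually $X_i>\frac{\varepsilon}{2}\mathcal{L}^2$, this is eventually strictly decreasing and hence converges, giving $\sigma_i<\infty$ via $X_i/\mathcal{L}^2\to\frac{\varepsilon}{2}$. Boundedness of $Y_i/\mathcal{L}^2$ then makes the inhomogeneous term in the $Z_i$-ODE bounded, so $Z_i$ is bounded, whence the bracket above is $O(\mathcal{L}^4)$. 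Switching to the $t$-variable using $\frac{\varepsilon}{2}\mathcal{L} t\to 1$ turns this into $\bigl|\frac{d}{dt}\log(Y_i/\mathcal{L}^2)\bigr|\le\tilde c/t^3$, which \emph{is} integrable; the explicit comparison solution $g(t)=g_0\exp\bigl(\frac{\tilde c}{2}(t^{-2}-T^{-2})\bigr)$ has a positive limit, forcing $\sigma_i>0$. Only after this does the paper return to the $Z_i$-ODE with $c_2$ now genuinely convergent and apply lemma \ref{LemmaAsymptotics}.
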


\begin{proof}
It is straightforward to compute that 
\begin{align*}
    \frac{d}{ds} \frac{X_i- \frac{\varepsilon}{2} \mathcal{L}^2}{\mathcal{L}^4} = & \  \frac{X_i - \frac{\varepsilon}{2} \mathcal{L}^2}{\mathcal{L}^4} \left( -3 \sum_{i=1}^r d_j X_j^2 + 3 \frac{\varepsilon}{2} \mathcal{L}^2 - 1 \right) + \mu_i \left( \frac{Y_i}{\mathcal{L}^2} \right)^2 \\
    & \ + \left( \frac{\varepsilon}{2} \right) \left( \frac{\varepsilon}{2} - \sum_{i=1}^r d_j \left( \frac{X_j}{\mathcal{L}} \right)^2 \right).
\end{align*}

Along regular trajectories we have $\frac{X_i}{\mathcal{L}^2} \to \frac{\varepsilon}{2}$ as $s \to \infty.$ Thus $\frac{X_i}{\mathcal{L}} \to 0$ as $s \to \infty$ and since $\mu_i \geq 0$ it follows that $X_i>\frac{\varepsilon}{2} \mathcal{L}^2$ for large times. In fact, if $\sigma_i$ converges, then $\frac{Y_i}{\mathcal{L}^2}=\frac{X_i}{\mathcal{L}^2} \frac{Y_i}{X_i}  \to \frac{\varepsilon}{2}\sigma_i$ as $s \to \infty$ and lemma \ref{LemmaAsymptotics} implies that $\frac{X_i - \frac{\varepsilon}{2} \mathcal{L}^2}{ \mathcal{L}^4 } \to (\frac{\varepsilon}{2})^2 \left( \mu_i \sigma_i^2 + 1 \right)$ as $s \to \infty$.

To show that $\sigma_i$ exist and $\sigma_i \in (0, \infty)$ it suffices to establish convergence of $\frac{Y_i}{\mathcal{L}^2}$ in $(0, \infty).$ Note that 
    \begin{align*}
        \frac{d}{ds} \frac{Y_i}{\mathcal{L}^2} =  \frac{Y_i}{\mathcal{L}^2} \left( - \sum_{i=1}^2 d_j X_j^2 + \frac{\varepsilon}{2} \mathcal{L}^2 - X_i \right) <0
    \end{align*}
for large times as eventually we have $X_i > \frac{\varepsilon}{2} \mathcal{L}^2$ for $i=1, \ldots, r.$

It remains to show that $\sigma_i$ are positive. Pick some fixed time $T>0$. By the previously established asymptotic estimates, there are constants $c_i$ such that 
    \begin{align*}
        \left| - \sum_{i=1}^r d_j X_j^2 + \frac{\varepsilon}{2} \mathcal{L}^2 - X_i \right| \le c_i \mathcal{L}^4
    \end{align*} for $t\geq T$.

    Using $\frac{d}{ds}= \mathcal{L}\frac{d}{dt}$ as well as $\frac{\varepsilon}{2} \mathcal{L} \cdot t \to 1$ for $t \to \infty$, we obtain

    \begin{align}
        \label{eqAsBounds}
        \left| \frac{d}{dt}\frac{Y_i}{\mathcal{L}^2} \right| \le \Tilde{c}_i \frac{Y_i}{\mathcal{L}^2} \frac{1}{t^3}
    \end{align}
for $t \geq T$ and some constants $\Tilde{c}_i>0.$

    The corresponding differential equation 
    \begin{align*}
        \frac{dg}{dt} = - \Tilde{c} \frac{g}{t^3}, \ g(T)=g_0>0
    \end{align*}
    has the explicit solution  $g(t)=g_0 \exp \left( \frac{\Tilde{c}}{2} \left( \frac{1}{t^2} - \frac{1}{T^2} \right) \right).$ In particular, $g(t)$ converges to a positive constant as $t \to \infty.$ By comparison, $\frac{Y_i}{\mathcal{L}^2}$ remains bounded away from zero for $t \geq T$ and we obtain positivity of $\sigma_i$.
\end{proof}

\begin{proposition}
    \label{ScalAtInfinity}
    Along any regular trajectory the scalar curvature satisfies
    \begin{align*}
        \frac{R}{\mathcal{L}^2} \to \left( \frac{\varepsilon}{2} \right)^2 \left( \sum_{i=1}^r d_i \mu_i \sigma_i^2 - n(n-1) \right)
    \end{align*}
    as $s \to \infty.$

    In particular, the induced gradient expanding Ricci soliton has positive scalar curvature if its asymptotic cone 
    \begin{align*}
     \left( (0, \infty) \times S^d \times M_2 \times \ldots \times M_r, \ dt^2 + (\sigma_1^{-1} t)^2 g_{S^d} + \sum_{i=2}^r ( \sigma_i^{-1} t)^2 g_{M_i} \right)   
    \end{align*}
    has positive scalar curvature. 
\end{proposition}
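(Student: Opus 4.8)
The plan is to read off the limit from the algebraic expression for the rescaled scalar curvature and let the lower-order terms cancel. Recall from Section~\ref{SectionDesingularization} that $\mathcal{R}=R\mathcal{L}^2=-\bigl(\mathcal{S}_1+\mathcal{S}_2^2+(n+1)\tfrac{\varepsilon}{2}\mathcal{L}^2\bigr)$, so $R/\mathcal{L}^2=\mathcal{R}/\mathcal{L}^4$, and the task is to expand the numerator in powers of $\mathcal{L}^2$ (which tends to $0$ along a regular trajectory). Lemma~\ref{AsymptoticLemma} supplies exactly the two inputs needed: $X_i=\tfrac{\varepsilon}{2}\mathcal{L}^2+(\tfrac{\varepsilon}{2})^2(\mu_i\sigma_i^2+1)\mathcal{L}^4+o(\mathcal{L}^4)$, and, since $\sigma_i\in(0,\infty)$, also $Y_i/\mathcal{L}^2=(X_i/\mathcal{L}^2)(Y_i/X_i)\to\tfrac{\varepsilon}{2}\sigma_i$, hence $Y_i^2=(\tfrac{\varepsilon}{2})^2\sigma_i^2\mathcal{L}^4+o(\mathcal{L}^4)$. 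Substituting these into $\mathcal{S}_1=\sum_i d_iX_i^2+\sum_i d_i\mu_iY_i^2+(n-1)\tfrac{\varepsilon}{2}\mathcal{L}^2-1$ and $\mathcal{S}_2=\sum_i d_iX_i-1$, and squaring the latter, one checks (using $\sum_i d_i=n$) that the $\mathcal{L}^0$ and $\mathcal{L}^2$ coefficients of $\mathcal{S}_1+\mathcal{S}_2^2+(n+1)\tfrac{\varepsilon}{2}\mathcal{L}^2$ vanish identically, while the $\mathcal{L}^4$ coefficient is $(\tfrac{\varepsilon}{2})^2\bigl(n(n-1)-\sum_i d_i\mu_i\sigma_i^2\bigr)$. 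Dividing by $\mathcal{L}^4$ and letting $s\to\infty$ then gives $R/\mathcal{L}^2\to(\tfrac{\varepsilon}{2})^2\bigl(\sum_i d_i\mu_i\sigma_i^2-n(n-1)\bigr)$.

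For the ``in particular'' assertion, I would first note that by Lemma~\ref{AsymptoticLemma}, together with $\tfrac{\varepsilon}{2}\mathcal{L}\cdot t\to1$ and $\dot f_i\to\sigma_i^{-1}$, the soliton is asymptotic to the cone displayed in the proposition, whose link carries the product metric $\sigma_1^{-2}g_1+\dots+\sigma_r^{-2}g_r$. The scalar curvature of this cone is computed directly from the warped-product formula $R=\tr(r)-\tr(L^2)-\tr(L)^2-2\tr(\dot L)$ of Section~\ref{SectionMultipleWarpedProductRS}: with $f_i=\sigma_i^{-1}t$ one has $L=\tfrac1t\id_n$ and $\tr(r)=\tfrac1{t^2}\sum_i d_i\mu_i\sigma_i^2$, so $R_{\mathrm{cone}}=\tfrac1{t^2}\bigl(\sum_i d_i\mu_i\sigma_i^2-n(n-1)\bigr)$. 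Hence the asymptotic cone has positive scalar curvature precisely when $\sum_i d_i\mu_i\sigma_i^2>n(n-1)$, i.e.\ precisely when $\lim_{s\to\infty}R/\mathcal{L}^2>0$; since $\mathcal{L}>0$ this forces $R>0$ near the end of the soliton, and the upgrade to $R>0$ everywhere is the maximum-principle result of Bamler--Chen \cite{BamlerChenDegreeTheoryRS} for asymptotically conical expanding gradient Ricci solitons.

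The main work, and the only genuine obstacle, is the expansion in the first paragraph: two orders cancel, so the crude asymptotics $X_i/\mathcal{L}^2\to\tfrac{\varepsilon}{2}$, $Y_i/\mathcal{L}\to0$ of Section~\ref{SectionAsymptoticsMetric} are not enough and one must use the full $\mathcal{L}^4$-accurate statement of Lemma~\ref{AsymptoticLemma}, as well as the fact that $\sigma_i\in(0,\infty)$ so that $Y_i/\mathcal{L}^2$ converges to something finite and nonzero. Some care is needed to check that the remainders in $X_i^2$, $Y_i^2$ and $\mathcal{S}_2^2$ are all $o(\mathcal{L}^4)$, but once the coefficients are aligned the cancellations are forced and the rest is short. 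The cone curvature computation and the appeal to Bamler--Chen for global positivity are routine or cited.
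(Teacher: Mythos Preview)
Your argument is correct and essentially the same as the paper's: both compute $\mathcal{R}/\mathcal{L}^4$ via Lemma~\ref{AsymptoticLemma} and then invoke Bamler--Chen, the only difference being that the paper starts from the expanded formula for $\mathcal{R}$ and groups $2\sum_i d_i X_i - n\varepsilon\mathcal{L}^2 = 2\sum_i d_i(X_i - \tfrac{\varepsilon}{2}\mathcal{L}^2)$ so that every term of $\mathcal{R}/\mathcal{L}^4$ already has a finite limit and no separate cancellation check is needed. One small omission: before invoking \cite{BamlerChenDegreeTheoryRS} the paper explicitly notes that the solitons have bounded curvature (by direct computation), which is a hypothesis of that theorem.
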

\begin{proof}
    The formula for the scalar curvature $\mathcal{R}=R\mathcal{L}^2$ from section \ref{SectionDesingularization} shows that 
\begin{align*}
     \frac{\mathcal{R}}{\mathcal{L}^4}  = & \ 2 \sum_{i=1}^r d_i \frac{X_i-\frac{\varepsilon}{2} \mathcal{L}^2}{\mathcal{L}^4} - \sum_{i=r}^r d_i \left( \frac{X_i}{\mathcal{L}^2} \right)^2 - \left( \sum_{i=1}^r d_i \frac{X_i}{\mathcal{L}^2} \right)^2 - \sum_{i=1}^r d_i \mu_i \left( \frac{Y_i}{\mathcal{L}^2} \right)^2 \\
     \to & \ \left( \frac{\varepsilon}{2} \right)^2 \left( 2 \sum_{i=1}^r d_i (\mu_i \sigma_i^2 +1)  - n - n^2 - \sum_{i=1}^r d_i \mu_i \sigma_i^2 \right)  \\
     = & \ \left( \frac{\varepsilon}{2} \right)^2 \left( \sum_{i=1}^r d_i \mu_i \sigma_i^2 - n(n-1) \right)
\end{align*}
as $s \to \infty.$

We note that the solitons have bounded curvature by direct computation. Therefore we can apply the work of Bamler-Chen \cite[Theorem 2.3]{BamlerChenDegreeTheoryRS} which shows that if the scalar curvature of an expanding Ricci soliton is positive at infinity, then it is positive everywhere. The claim follows from the observation that the scalar curvature of the cone is 
    \begin{align*}
        R_{\text{cone}}=  \left( \sum_{i=1}^r d_i \mu_i \sigma_i^2- n(n-1) \right) \frac{1}{t^2}.
    \end{align*}
\end{proof}

\section{A limiting subsystem}
\label{SectionSubsystem}
Consider the Ricci soliton ODE for $\mathcal{L}=0$ and $X_i=Y_i=0$ for $i=2, \ldots, r$. Then one obtains an invariant subsystem in $X=X_1, Y=Y_1$ with $d=d_1 \geq 2$. Namely,\begin{align*}
    X^{'} & = X(dX^2-1) +(d-1)Y^2, \\
    Y^{'} & = XY ( dX - 1 ).
\end{align*}

The ODE has the fixed points $(X,Y)=(0,0),$ $( \pm \frac{1}{\sqrt{d}},0),$ $(\frac{1}{d}, \pm \frac{1}{d} ).$ The fixed point $(X,Y)=(\frac{1}{d}, \frac{1}{d})$ is a saddle and there is precisely one trajectory in the unstable manifold that emanates into the region $0 < X, Y < \frac{1}{d}.$

\begin{proposition}
\label{AsymptoticsSubsystem}
    Suppose that $0 < X, Y < \frac{1}{d}$ at $s_0 \in \R.$ Then $0 < X, Y < \frac{1}{d}$ for all times and moreover
    \begin{align*}
        X, Y \to 0, \ 
        \frac{X}{Y^2} \to (d-1),  \ \frac{1}{X} \left( \frac{X}{Y^2}-(d-1) \right) \to 2 (d-1)
    \end{align*}
    as $s \to \infty.$
\end{proposition}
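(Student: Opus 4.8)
The plan is to analyze the planar system for $(X,Y)$ directly, using the region $0<X,Y<\tfrac{1}{d}$ as a trapping region and then extracting the three successively refined asymptotic statements by the same bootstrapping technique used in Section~\ref{SectionAsymptoticsMetric} and Lemma~\ref{AsymptoticLemma}. First I would check invariance of the box $0<X,Y<\tfrac1d$: on the face $X=0$ one has $X'=(d-1)Y^2>0$ pointing inward; on $Y=0$ one has $Y'=0$ so $\{Y=0\}$ is invariant and a trajectory starting with $Y>0$ stays there; on $X=\tfrac1d$ one has $X'=(d-1)Y^2>0$ pointing \emph{out}, so I would instead argue that $dX-1<0$ in the box forces $X'<X(dX^2-1)+(d-1)X^2 = X(dX-1)(X+ \tfrac{d-1}{d}) <0$ whenever $X$ is near $\tfrac1d$, or more cleanly track $\tfrac{d}{ds}\log\tfrac{Y}{X}$ or an explicit comparison; the upshot is that once $X<\tfrac1d$ it stays $<\tfrac1d$. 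Since the box is forward-invariant and bounded, the trajectory is defined for all $s\ge s_0$ and has a nonempty $\omega$-limit set inside $\overline{\{0\le X,Y\le \tfrac1d\}}$.

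Next I would identify the $\omega$-limit set as $\{(0,0)\}$. The key monotonicity is $\tfrac{d}{ds}\log\tfrac{Y}{X} = -\,\tfrac{\mu_1 Y^2}{X} + \dots$; more usefully, from $Y'=XY(dX-1)<0$ in the box, $Y$ is strictly decreasing and converges to some $Y_\infty\ge 0$. On the $\omega$-limit set $Y=Y_\infty$ is constant, so $Y'=0$ there, forcing $X\,(dX-1)=0$, i.e. $X\in\{0,\tfrac1d\}$ on the $\omega$-limit set; but $X=\tfrac1d$ gives $X'=(d-1)Y_\infty^2$ which must vanish, so $Y_\infty=0$ and the only candidate fixed points in the closure are $(0,0)$ and $(\tfrac1d,0)$. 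Ruling out $(\tfrac1d,0)$: its linearization has eigenvalues $2$ and $-1+\tfrac1d$ (a hyperbolic saddle), and its stable manifold is a piece of the invariant line $\{Y=0\}$, which our trajectory (with $Y>0$) does not meet; hence the $\omega$-limit set is $\{(0,0)\}$ and $X,Y\to 0$.

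For the second statement I would apply Lemma~\ref{LemmaAsymptotics} to $\tfrac{X}{Y^2}$. Compute
\[
\frac{d}{ds}\,\frac{X}{Y^2} \;=\; \frac{X}{Y^2}\bigl(dX^2-1\bigr)+(d-1) \;-\; \frac{X}{Y^2}\cdot 2X(dX-1)
\;=\; \frac{X}{Y^2}\bigl(-1 - 2dX^2 + dX^2 + 2X\bigr)+(d-1),
\]
so with $c_1(s)=1+2dX^2-dX^2-2X \to 1$ and $c_2(s)=d-1\to d-1$ as $s\to\infty$, Lemma~\ref{LemmaAsymptotics} gives $\tfrac{X}{Y^2}\to d-1$. (I would double-check the algebra of the quadratic term; this bookkeeping is the only genuinely error-prone spot, but it is routine.) For the third statement, set $Z = \tfrac1X\bigl(\tfrac{X}{Y^2}-(d-1)\bigr) = \tfrac{1}{Y^2}-\tfrac{d-1}{X}$; using the first statement this is a well-defined quantity tending to some limit once I show convergence, and differentiating $Z$ and simplifying with $X'$, $Y'$ should yield an equation of the form $Z' = -c_1(s)\,Z + c_2(s)$ with $c_1(s)\to c_1^*>0$ and $c_2(s)\to c_2^*$ such that $c_2^*/c_1^* = 2(d-1)$; another application of Lemma~\ref{LemmaAsymptotics} then finishes.

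The main obstacle I anticipate is not conceptual but organizational: getting the trapping-region argument for the face $X=\tfrac1d$ clean (the naive inward/outward check fails there, so one must use that $Y$ is decreasing together with the sign structure of $X'$, or exhibit an explicit sub/supersolution), and then grinding the derivative of $Z$ into the exact $Z'=-c_1 Z + c_2$ form with the right limiting constants. Everything else is a direct transcription of the $\omega$-limit and Lemma~\ref{LemmaAsymptotics} machinery already developed for the full system.
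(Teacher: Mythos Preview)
Your approach is the paper's: trap in the box, converge to the origin, then two applications of Lemma~\ref{LemmaAsymptotics}. The only real slip is at the face $X=\tfrac1d$, where you dropped the term $X(dX^2-1)$: in fact
\[
X'\big|_{X=1/d} \;=\; \tfrac1d\bigl(\tfrac1d-1\bigr)+(d-1)Y^2 \;=\; (d-1)\Bigl(Y^2-\tfrac{1}{d^2}\Bigr)\;<\;0 \qquad\text{for } 0<Y<\tfrac1d,
\]
so the naive boundary check already succeeds and no workaround is needed (your proposed inequality $X'<X(dX^2-1)+(d-1)X^2$ would in any case require $Y<X$, which is not given, and the factorisation should read $X(dX-1)(X+1)$). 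From here the paper simply asserts $X',Y'<0$ throughout the open box and concludes $X,Y\searrow 0$; your $\omega$-limit argument via the strict monotonicity of $Y$ alone is longer but actually more careful, since the blanket claim $X'<0$ fails at points such as $(X,Y)=(0.01,0.1)$ for $d=2$. For the third asymptotic the paper tracks $\tfrac{1}{Y^2}\bigl(\tfrac{X}{Y^2}-(d-1)\bigr)\to 2(d-1)^2$ rather than your $Z=\tfrac{1}{X}\bigl(\tfrac{X}{Y^2}-(d-1)\bigr)$, and then divides by $\tfrac{X}{Y^2}\to d-1$; either choice yields a linear ODE to which Lemma~\ref{LemmaAsymptotics} applies.
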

\begin{proof}
    Clearly, $ X \geq 0$ and $Y \geq 0$ are preserved.  Furthermore, note that for $0 < X, Y < \frac{1}{d}$ we have $X^{'}, Y{'} < 0$. Thus $0 \leq X, Y \leq \frac{1}{d}$ is also preserved and moreover if $0<X,Y< \frac{1}{d}$ at $s_0 \in \R,$ then $X, Y \to 0$ as $s \to \infty.$

    Furthermore, notice that 
    \begin{align*}
    \frac{d}{ds} \frac{X}{Y^2} = \frac{X}{Y^2} ( -d X^2 + 2X -1 ) + d-1
    \end{align*}
    and thus lemma \ref{LemmaAsymptotics} implies that $\frac{X}{Y^2} \to d-1$ as $s \to \infty$. Similarly, it follows from 
    \begin{align*}
    \frac{d}{ds} \frac{\frac{X}{Y^2} - (d-1)}{Y^2} =  \frac{\frac{X}{Y^2} - (d-1)}{Y^2} \left( - d X^2 + 2X -1 -2 X (dX-1) \right) + (d-1) \frac{X}{Y^2}( 2 - dX )
    \end{align*}
    that $\frac{1}{Y^2} \left( \frac{X}{Y^2} - (d-1) \right) \to 2 (d-1)^2$ as $s \to \infty.$
\end{proof}

Consequently, for trajectories as in proposition \ref{AsymptoticsSubsystem}, there are $\alpha_1 >0$ and $s_1 \in \R$ such that $\left| X - (d-1) Y^2 \right| 
    \leq \alpha_1 X^2$
for $s > s_1.$ Thus there are $\alpha_2 >0$ and $s_2 \geq s_1$ such that the renormalized scalar curvature $\mathcal{R}$ restricted to the subsystem satisfies
\begin{align*}
    \mathcal{R}  = 2d X - d(d+1) X^2 - (d-1) d Y^2 \geq dX - \alpha_2 X^2 >0
\end{align*}
for $s > s_2.$

\section{Expanding Ricci solitons asymptotic to cones}
\label{SectionProofMainTheorem}

Throughout this section we assume that $d_1 \geq 2$ and that the manifolds $(M_i,g_i)$ are Ricci flat, hence $\mu_i=0$ for $i=2, \ldots, r.$

Recall from lemma \ref{AsymptoticLemma} that all $\sigma_i^{-1} = \lim_{t \to \infty} \Dot{f}_i$ exist and are positive. The proof of Theorem \ref{MainTheorem} (c) relies on the following lemma. 

\begin{lemma}
    \label{Simga1Realized}
    For any $\Bar{f}_2, \ldots, \Bar{f}_r>0$ the trajectories corresponding to the initial conditions $(\Bar{f}_2, \ldots, \Bar{f}_r, C)$ achieve all values of $\sigma_1 \in (0, \infty)$ as $C$ varies in $(- \infty,0)$. 
\end{lemma}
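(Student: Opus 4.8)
The plan is to show that the map $C \mapsto \sigma_1(C)$, defined on $(-\infty, 0)$ via the trajectory with initial data $(\bar f_2, \ldots, \bar f_r, C)$, is continuous and surjects onto $(0,\infty)$. Continuity should follow from continuous dependence of ODE solutions on parameters together with the asymptotic characterization $\sigma_1^{-1} = \lim_{s\to\infty} X_1/Y_1$; a little care is needed because this is a limit as $s \to \infty$, so I would instead use the refined asymptotics of lemma \ref{AsymptoticLemma}, namely that $Y_1/\mathcal{L}^2 \to \tfrac{\varepsilon}{2}\sigma_1$ (using $\mu_i = 0$, so $X_i/\mathcal{L}^4 \to (\tfrac{\varepsilon}{2})^2$), and the fact established in the proof of that lemma that $Y_1/\mathcal{L}^2$ is eventually monotone with explicit exponential-type bounds. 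This lets one control $\sigma_1$ by the value of $Y_1/\mathcal{L}^2$ at a large but finite time $T$, which depends continuously on $C$, giving continuity of $C \mapsto \sigma_1(C)$ on $(-\infty,0)$. By the intermediate value theorem it then suffices to analyze the two boundary limits $C \to 0^-$ and $C \to -\infty$.

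For $C \to 0^-$: the trajectories converge to the Einstein trajectory, along which section \ref{SectionAsymptoticsMetric} shows $X_i \to 1/n$ and $\mathcal{L}^2 \to \tfrac{2}{n\varepsilon}$, so $\dot f_i / f_i \to \sqrt{n\varepsilon/2}$, hence $\dot f_1 \to \infty$ and $\sigma_1 = 0$ on the limit. One must upgrade this to the statement that $\sigma_1(C) \to 0$ as $C \to 0^-$. I would do this by continuous dependence on a long-but-finite time interval: for $C$ near $0$ the trajectory stays close to the Einstein one up to a large time $T$, where $X_1/Y_1$ is large; then I would invoke a comparison/monotonicity argument (as in the proof of lemma \ref{AsymptoticLemma}, where $Y_i/\mathcal{L}^2$ is eventually decreasing while $X_i/\mathcal{L}^4$ converges) to show $X_1/Y_1$ can only grow further, so $\sigma_1^{-1}$ is large, i.e. $\sigma_1(C)$ is small.

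For $C \to -\infty$: this is where the limiting subsystem of section \ref{SectionSubsystem} enters, and it is the main obstacle. As $C \to -\infty$, the rescaled initial data \eqref{RescaledSmoothnessConditions} is fixed, but one expects (as the introduction explains) the trajectories to converge, after the desingularization, to the invariant subsystem trajectory with $\mathcal{L} = 0$, $X_i = Y_i = 0$ for $i \geq 2$, described in proposition \ref{AsymptoticsSubsystem}. Along that subsystem trajectory one has $X/Y^2 \to d-1$ while $X \to 0$, so $X/Y = (X/Y^2)\cdot Y \to 0$, i.e. $\sigma_1^{-1} = 0$ and hence $\sigma_1 = +\infty$ in the limit. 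Again this must be promoted to $\sigma_1(C) \to \infty$ as $C \to -\infty$ for the actual regular trajectories. The delicate point is that the convergence to the subsystem is only on compact $s$-intervals, whereas $\sigma_1$ is an $s \to \infty$ quantity; I would handle this by the same device as above — use continuous dependence (now in the parameter $1/C \to 0$, treating the limit system as the boundary) to get $X_1/Y_1$ small at a large finite time $T = T(C)$, then use the eventual monotonicity of $Y_1/\mathcal{L}^2$ and the bound \eqref{eqAsBounds} to conclude $\sigma_1(C)$ is large. Combining the two boundary behaviors with continuity, $\sigma_1$ takes all values in $(0,\infty)$.
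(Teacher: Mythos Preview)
Your overall architecture --- continuity of $C\mapsto\sigma_1(C)$, then analyzing the two boundary limits and invoking the intermediate value theorem --- is exactly the paper's strategy, and your continuity argument via the uniform bound \eqref{eqAsBounds} is essentially the content of proposition \ref{Sigma1Continuous}.

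The genuine gap is in the $C\to-\infty$ step. You propose to control $\sigma_1$ via the monotonicity of $Y_1/\mathcal{L}^2$ and the bound \eqref{eqAsBounds}. Both of these quantities involve $\mathcal{L}$ in the denominator, and the limiting subsystem of section \ref{SectionSubsystem} lives on the set $\{\mathcal{L}=0\}$. Concretely: at your chosen time $T$, the regular trajectory is near the subsystem, so $\mathcal{L}(T)$ is close to $0$ and $Y_1/\mathcal{L}^2$ is enormous; but the constant $\tilde c_1$ in \eqref{eqAsBounds} is built from bounds on $(X_1-\tfrac{\varepsilon}{2}\mathcal{L}^2)/\mathcal{L}^4$ and $X_j/\mathcal{L}^2$ (see the proof of proposition \ref{Sigma1Continuous}), and these also blow up as the trajectory approaches $\{\mathcal{L}=0\}$. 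So the comparison gives no uniform lower bound on $\lim Y_1/\mathcal{L}^2$ as $C\to-\infty$, and the argument does not close. The uniformity in proposition \ref{Sigma1Continuous} is explicitly over a compact neighborhood $K$ in the interior of phase space, which is precisely what you cannot have here.

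The paper resolves this with a different monotone quantity that survives the limit $\mathcal{L}\to 0$: proposition \ref{QuotientYX} introduces $Z=\frac{1}{X_1}\bigl((d_1-1)Y_1^2+\tfrac{\varepsilon}{2}\mathcal{L}^2\bigr)$ and shows that once $\frac{Y_1}{X_1}>\sqrt{\frac{n-1}{d_1-1}}$ and $1-X_1-Z>0$, the ratio $Y_1/X_1$ is non-decreasing for all later times. This criterion is meaningful on the subsystem (where $Z=(d_1-1)Y^2/X$) and, by remark \ref{SigmaAlongLimitTrajectories}, is eventually satisfied along the subsystem trajectory; continuous dependence then places the regular trajectories for $C\ll 0$ into the same regime at a finite time, after which $Y_1/X_1$ can only increase, forcing $\sigma_1$ large. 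The same proposition (part (b), with $1-X_1-Z<0$) handles the $C\to 0^-$ end cleanly, replacing the somewhat vague ``$X_1/Y_1$ can only grow further'' step in your sketch.
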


Before proving the lemma, we show how it implies Theorem \ref{MainTheorem} (c).

\begin{proof}[Proof of Theorem \ref{MainTheorem} (c)]
    For a given cone 
    \begin{align*}
     \left( (0, \infty) \times S^{d_1} \times M_2 \times \ldots \times M_r, \ dt^2 + (\sigma_1^{-1} t)^2 g_{S^{d_1}} + \sum_{i=2}^r (\sigma_i^{-1} t)^2 g_{i} \right)   
    \end{align*}
    pick $C<0$ according to lemma \ref{Simga1Realized} such that the trajectory with initial condition $f_i(0)=\Bar{f}_i>0$ for $i=2, \ldots, r$ and $\Ddot{u}(0) = \frac{C}{d_1+1}$ satisfies $\lim_{t \to \infty} \Dot{f}_1(t) = \sigma_1^{-1}.$

    Since all $M_i$ are by assumption Ricci flat, we have $\mu_i=0$ for $i=2, \ldots, r$ and we may observe that $Y_i$ decouples from the other equations in the Ricci soliton ODE. In particular, given any regular trajectory $(X_1, Y_1, X_i, Y_i, \mathcal{L})$, we may obtain for any $c_i>0$ another regular trajectory given by $(X_1, Y_1, X_i, c_i \cdot Y_i, \mathcal{L})$. Clearly the limit of $\frac{Y_1}{X_1}$ is not affected by this process while that of $\frac{Y_i}{X_i}$ is rescaled by a factor of $c_i$ for $i=2, \ldots, r$. Since we know this limit to be nonzero along any regular trajectory, we may thus obtain any limits $\sigma_i$ by suitable rescalings of $Y_i$.
\end{proof}

\begin{remark}
    \normalfont
    If the original trajectory has the initial condition $(\Bar{f_2}, \ldots, \Bar{f}_r, C_0)$, then the rescaled trajectory has the initial condition $(c_2^{-1} \cdot \Bar{f_2}, \ldots, c_r^{-1} \cdot \Bar{f}_r, C_0)$.
\end{remark}

The proof of lemma \ref{Simga1Realized} relies on the following observations. 

\begin{proposition}
\label{PreservedInequalities}
    Along regular and Einstein trajectories the following hold for all time:
    \begin{itemize}
        \item $X_1 > X_i$ for $i=2, \ldots, r,$
        \item $X_1 > \sum_{j=1}^r d_j X_j^2,$
        \item $X_1 > \frac{\varepsilon}{2}\mathcal{L}^2.$
    \end{itemize}
\end{proposition}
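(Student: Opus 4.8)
The plan is to show that each of the three inequalities holds at the fixed point \eqref{RescaledSmoothnessConditions} (or in the limit $s\to-\infty$ along the trajectory) and is then preserved by the flow, using the differential equations from section \ref{SectionDesingularization}. The standard trick is to compute the $s$-derivative of the quantity that is supposed to stay positive and observe that on the boundary of the region (i.e.\ where the quantity vanishes) its derivative is strictly positive — or at least nonnegative in a way that rules out crossing — so the region $\{X_1 - (\cdots) > 0\}$ is invariant. Throughout one uses that $\mathcal L, X_i, Y_i > 0$ along regular and Einstein trajectories (established in section \ref{SectionDesingularization}), that $\mu_i = 0$ for $i \geq 2$ in the present section but $\mu_1 = d_1 - 1 > 0$, and that $\varepsilon > 0$.

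First I would treat $X_1 > X_i$. From the $X_i$-equation, $(X_1 - X_i)' = (X_1 - X_i)\bigl(\sum_j d_j X_j^2 - \tfrac{\varepsilon}{2}\mathcal L^2 - 1\bigr) + \mu_1 Y_1^2 - \mu_i Y_i^2$. Since $\mu_i \leq 0$ for $i \geq 2$ (indeed $=0$ here, but $\mu_i<0$ would only help) while $\mu_1 Y_1^2 \geq 0$, the inhomogeneous term is $\geq 0$; combined with $Y_1 > 0$ it is in fact $> 0$, so on $\{X_1 = X_i\}$ we get $(X_1 - X_i)' > 0$. At $s \to -\infty$ we have $X_1 \to \tfrac1{d_1} > 0 = X_i$, so the inequality holds initially and is preserved. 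Second, for $X_1 > \tfrac{\varepsilon}{2}\mathcal L^2$: compute $\frac{d}{ds}(X_1 - \tfrac{\varepsilon}{2}\mathcal L^2)$ using the $X_1$- and $\mathcal L$-equations; the common factor $\bigl(\sum_j d_j X_j^2 - \tfrac{\varepsilon}{2}\mathcal L^2\bigr)$ cancels against most terms, leaving something like $-X_1 + \mu_1 Y_1^2 + \tfrac{\varepsilon}{2}\mathcal L^2$ times a factor, and on the boundary $X_1 = \tfrac{\varepsilon}{2}\mathcal L^2$ this should reduce to a manifestly positive expression involving $\mu_1 Y_1^2 > 0$. Initially $X_1 \to \tfrac1{d_1}$ while $\mathcal L \to 0$, so the inequality holds at $-\infty$.

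Third, and this is the step I expect to be the main obstacle, $X_1 > \sum_j d_j X_j^2$. At $s \to -\infty$ the right side tends to $d_1 \cdot \tfrac1{d_1^2} = \tfrac1{d_1} = X_1$, so the inequality is only \emph{marginal} in the limit, not strict; one must argue it becomes strict immediately for $s$ finite (e.g.\ by examining the linearization at the fixed point, or by noting $\frac{d}{ds}(X_1 - \sum_j d_j X_j^2)$ is positive there) and then preserved. For preservation one computes $\frac{d}{ds}\bigl(X_1 - \sum_j d_j X_j^2\bigr)$: the term $X_1(\sum_j d_j X_j^2 - \tfrac{\varepsilon}{2}\mathcal L^2 - 1)$ minus $\sum_j 2 d_j X_j \cdot X_j(\sum_k d_k X_k^2 - \tfrac{\varepsilon}{2}\mathcal L^2 - 1) = (\sum_k d_k X_k^2 - \tfrac{\varepsilon}{2}\mathcal L^2 - 1)(X_1 - 2\sum_j d_j X_j^2)$, plus the inhomogeneous pieces $\mu_1 Y_1^2 + \tfrac{\varepsilon}{2}\mathcal L^2 - \sum_j 2 d_j X_j(\mu_j Y_j^2 + \tfrac{\varepsilon}{2}\mathcal L^2)$. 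On the boundary $X_1 = \sum_j d_j X_j^2$ the first group becomes $-(\sum_k d_k X_k^2 - \tfrac{\varepsilon}{2}\mathcal L^2 - 1) \cdot X_1$, and since $\sum_k d_k X_k^2 < X_1 < \tfrac1{d_1} \le 1$ (using the first inequality plus $\mathcal L^2 \ge 0$) the bracket is negative, making this contribution positive and proportional to $X_1$. One then needs the remaining inhomogeneous terms, after using $\mu_j = 0$ for $j \geq 2$ and $\mathcal S_2 \le 0$ i.e.\ $\sum_j d_j X_j \le 1$, to not overwhelm that positive term — here the factor $\tfrac{\varepsilon}{2}\mathcal L^2(1 - 2\sum_j d_j X_j) = -\tfrac{\varepsilon}{2}\mathcal L^2(2\sum_j d_j X_j - 1)$ has a sign depending on whether $\sum_j d_j X_j \gtrless \tfrac12$, so one likely must combine it with the $\mu_1 Y_1^2$ term and the $X_1 \cdot (\text{negative bracket})$ term, possibly also invoking $X_1 > \tfrac{\varepsilon}{2}\mathcal L^2$ and the $Y_1$-comparison, to close the estimate. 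I would carry out this algebra carefully, and if a clean pointwise sign argument fails on the boundary, fall back on a Gronwall-type comparison using the ODE for $X_1 - \sum_j d_j X_j^2$ together with the already-established bounds.
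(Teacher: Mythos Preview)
Your treatment of $X_1>X_i$ is exactly the paper's. The rest, however, has a structural gap: you attack the three items in the order (1),(3),(2), whereas the paper does (1),(2),(3), and this matters.

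For the second inequality $X_1>\sum_j d_jX_j^2$ the paper does \emph{not} use an ODE argument at all. It is a one--line algebraic consequence of the first inequality together with $\mathcal S_2\le 0$: from $X_1>X_i\ge 0$ one gets
\[
\sum_j d_jX_j^2 \;\le\; \Bigl(\max_j X_j\Bigr)\Bigl(\sum_j d_jX_j\Bigr)\;=\;X_1\,(1+\mathcal S_2)\;\le\;X_1,
\]
i.e.\ $\|X\|_2^2\le\|X\|_\infty\,\|X\|_1$ on the vector with entries $X_j$ repeated $d_j$ times. Strictness is immediate on regular trajectories since $\mathcal S_2<0$, and on Einstein trajectories since equality in $\|X\|_2^2\le\|X\|_\infty\|X\|_1$ would force all nonzero entries to coincide, contradicting $X_1>X_2>0$. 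Your differential computation for this item is unnecessary, and as you correctly sensed it does not close: on the boundary the inhomogeneous remainder contains $(d_1-1)Y_1^2(1-2d_1X_1)$ and $\tfrac{\varepsilon}{2}\mathcal L^2(1-2\sum_jd_jX_j)$, neither of which has a definite sign. The marginality at $s\to-\infty$ that worried you is likewise a non-issue once the inequality is algebraic.

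For the third inequality your sketch is too vague to succeed without (2). Carrying out the computation, on the boundary $X_1=\tfrac{\varepsilon}{2}\mathcal L^2$ one finds
\[
\bigl(X_1-\tfrac{\varepsilon}{2}\mathcal L^2\bigr)'\;=\;(d_1-1)Y_1^2 \;+\; X_1\Bigl(X_1-\sum_j d_jX_j^2\Bigr),
\]
and the second term is only known to be positive \emph{via} the second inequality. So the paper's order is forced: prove (2) algebraically from (1), then feed (2) into the boundary analysis for (3).
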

\begin{proof}
    $X_1 > X_i$ is preserved for $i=2, \ldots, r$ as 
\begin{align*}
    \frac{d}{ds} (X_1 - X_i) = (X_1 - X_i) \left( \sum_{j=1}^2 d_j X_j^2 - \frac{\varepsilon}{2} \mathcal{L}^2 - 1 \right) + (d_1-1) Y_1^2
\end{align*}
as $\mu_i=0$ for $i=2, \ldots, r$ and $Y_1 >0$.

The second statement now follows immediately from the first, using $||\cdot||_2^2 \le ||\cdot||_\infty \cdot ||\cdot||_1$ on the vector $X=(X_1, \ldots , X_1, X_2, \ldots , X_2, \ldots, X_r, \ldots, X_r)$ and that $||X||_1 = \sum d_i X_i \le 1$ with equality only along Einstein trajectories. In particular, the claim follows for regular trajectories. Along Einstein trajectories, we also have $||X||_2^2 < ||X||_{\infty}$ as equality is only possible if all nonzero $X_i$ are equal, which is impossible since $X_1 > X_2 > 0$.

For the last claim, it is immediate to compute that
\begin{align*}    
\frac{d}{ds} \left( X_1 - \frac{\varepsilon}{2} \mathcal{L}^2 \right) = & \ \left( X_1 - \frac{\varepsilon}{2} \mathcal{L}^2 \right) \left( \sum_{j=1}^r d_j X_j^2 - \frac{\varepsilon}{2} \mathcal{L}^2 - 1 \right)  \\
& \ + (d_1-1) Y_1^2 - \frac{\varepsilon}{2} \mathcal{L}^2 \left( \sum_{j=1}^r d_j X_j^2 - \frac{\varepsilon}{2} \mathcal{L}^2 \right).
\end{align*}
Note that whenever $X_1= \frac{\varepsilon}{2} \mathcal{L}^2$ we have 
\begin{align*}
    - \frac{\varepsilon}{2} \mathcal{L}^2 \left( \sum_{j=1}^r d_j X_j^2 - \frac{\varepsilon}{2} \mathcal{L}^2 \right) = X_1 \left( X_1 - \sum_{j=1}^r d_j X_j^2 \right) > 0
\end{align*}
and the claim follows.
\end{proof}

\begin{proposition}
\label{QuotientYX}
    Consider a trajectory of the Ricci soliton ODE with $X_1, Y_1 >0,$ $X_1 \geq X_i \geq 0$ for $i=2, \ldots, r$ and $\mathcal{L} \geq 0.$ Set 
\begin{align*}
    Z = \frac{1}{X_1} \left( (d_1-1)Y_1^2 + \frac{\varepsilon}{2} \mathcal{L}^2 \right).
\end{align*}
    \begin{enumerate}
        \item If $\frac{Y_1}{X_1} > \sqrt{ \frac{n-1}{d_1-1}}$ and $1-X_1-Z>0$ at $s_0 \in \R,$ then $\frac{Y_1}{X_1}$ is non-decreasing for $s \geq s_0.$
        \item If $\frac{Y_1}{X_1} < 1$ and $1-X_1-Z<0$ at $s_0 \in \R,$ then $\frac{Y_1}{X_1}$ is non-increasing for $s \geq s_0.$
    \end{enumerate}
\end{proposition}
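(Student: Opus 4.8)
The starting point is the identity
\begin{align*}
    \frac{d}{ds}\frac{Y_1}{X_1} = \frac{Y_1}{X_1}\,(1 - X_1 - Z),
\end{align*}
which follows at once from the equations for $X_1$ and $Y_1$: abbreviating $P := \sum_{j=1}^r d_j X_j^2 - \frac{\varepsilon}{2}\mathcal{L}^2$, one has $X_1' = X_1(P - 1 + Z)$ and $Y_1' = Y_1(P - X_1)$, and subtracting the logarithmic derivatives gives the claim. Set $W := 1 - X_1 - Z$. Since $X_1, Y_1 > 0$, the sign of $\frac{d}{ds}\frac{Y_1}{X_1}$ is the sign of $W$, so part (1) amounts to showing $W \geq 0$ on $[s_0, \infty)$ and part (2) to showing $W \leq 0$ on $[s_0, \infty)$. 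The plan is a barrier argument: compute $W'$ on the locus $\{W = 0\}$, show it has the right sign there (using the respective constraint on $\frac{Y_1}{X_1}$), and deduce that $W$ cannot cross zero.

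The computational heart is the evaluation of $W'$ on $\{W=0\}$. Differentiating $Z = X_1^{-1}\big((d_1-1)Y_1^2 + \frac{\varepsilon}{2}\mathcal{L}^2\big)$ using $X_1' = X_1(P-1+Z)$ together with $(Y_1^2)' = 2Y_1^2(P - X_1)$ and $(\tfrac{\varepsilon}{2}\mathcal{L}^2)' = \varepsilon\mathcal{L}^2 P$, one obtains $Z' = Z(P + 1 - Z) - 2(d_1-1)Y_1^2$, hence
\begin{align*}
    W' = -X_1(P-1+Z) - Z(P+1-Z) + 2(d_1-1)Y_1^2.
\end{align*}
On $\{W = 0\}$ we have $Z = 1 - X_1$, and from the definition of $Z$ also $\frac{\varepsilon}{2}\mathcal{L}^2 = X_1(1-X_1) - (d_1-1)Y_1^2$, which in particular determines the $\frac{\varepsilon}{2}\mathcal{L}^2$ appearing inside $P = d_1 X_1^2 + \sum_{j\geq 2} d_j X_j^2 - \frac{\varepsilon}{2}\mathcal{L}^2$. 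Substituting these relations, all $P$-terms cancel and one is left with
\begin{align*}
    W'\big|_{\{W=0\}} = (d_1-1)\big(Y_1^2 - X_1^2\big) - \sum_{j=2}^r d_j X_j^2.
\end{align*}
This identity is the crux; the rest is sign bookkeeping.

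For part (1): since $0 \leq X_j \leq X_1$ and $\sum_{j\geq 2} d_j = n - d_1$, we get $\sum_{j\geq 2} d_j X_j^2 \leq (n-d_1)X_1^2$, hence on $\{W=0\}$
\begin{align*}
    W'\big|_{\{W=0\}} \geq (d_1-1)Y_1^2 - (n-1)X_1^2,
\end{align*}
which is strictly positive exactly when $\frac{Y_1}{X_1} > \sqrt{\frac{n-1}{d_1-1}}$. Now suppose $W(s_0) > 0$ and $\frac{Y_1}{X_1}(s_0) > \sqrt{\frac{n-1}{d_1-1}}$, and let $s_1 = \inf\{\, s > s_0 : W(s) < 0 \,\}$. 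If this set is nonempty then $s_1 > s_0$ (by continuity of $W$), $W \geq 0$ on $[s_0, s_1]$, and $W(s_1) = 0$. On $[s_0, s_1]$ the identity for $\frac{d}{ds}\frac{Y_1}{X_1}$ shows $\frac{Y_1}{X_1}$ is non-decreasing, so $\frac{Y_1}{X_1}(s_1) > \sqrt{\frac{n-1}{d_1-1}}$ and therefore $W'(s_1) > 0$; but then $W < 0$ just to the left of $s_1$, contradicting $W \geq 0$ on $[s_0, s_1]$. Hence $W \geq 0$ on $[s_0, \infty)$, proving (1). Part (2) is symmetric: dropping the nonnegative sum gives $W'|_{\{W=0\}} \leq (d_1-1)(Y_1^2 - X_1^2) < 0$ whenever $\frac{Y_1}{X_1} < 1$, and the same barrier argument — now applied to $\{\, s > s_0 : W(s) > 0 \,\}$, along whose initial segment $\frac{Y_1}{X_1}$ is non-increasing so that the constraint $\frac{Y_1}{X_1} < 1$ persists — yields $W \leq 0$ on $[s_0, \infty)$.

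I expect the only delicate point to be the cancellation producing the formula for $W'|_{\{W=0\}}$; everything else is elementary. Throughout one uses the standing hypotheses on the trajectory: $X_1 > 0$ (to form $\frac{Y_1}{X_1}$ and $Z$), $Y_1 > 0$ (so $\mathrm{sign}\,\tfrac{d}{ds}\tfrac{Y_1}{X_1} = \mathrm{sign}\, W$), $0 \leq X_j \leq X_1$ (for the estimate on $\sum_{j\geq 2} d_j X_j^2$), and $\mathcal{L} \geq 0$ (so $\frac{\varepsilon}{2}\mathcal{L}^2 \geq 0$); these hold along regular and Einstein trajectories by Proposition \ref{PreservedInequalities} and the discussion in Section \ref{SectionDesingularization}.
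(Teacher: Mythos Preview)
Your proof is correct and follows essentially the same approach as the paper: both compute $\frac{d}{ds}\frac{Y_1}{X_1} = \frac{Y_1}{X_1}(1-X_1-Z)$, evaluate $(1-X_1-Z)'$ on the locus $\{Z=1-X_1\}$ to obtain $-\sum_{j=1}^r d_j X_j^2 + (d_1-1)Y_1^2 + X_1^2$ (your expression $(d_1-1)(Y_1^2-X_1^2)-\sum_{j\geq 2}d_jX_j^2$ is the same after splitting off the $j=1$ term), and then bound this quantity using $0\leq X_j\leq X_1$. The only differences are cosmetic: the paper records the full expression for $(1-X_1-Z)'$ before restricting to $Z=1-X_1$, while you substitute earlier, and you spell out the barrier/first-crossing argument that the paper leaves implicit.
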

\begin{proof}
Note that 
    \begin{align*}
        \frac{d}{ds} \frac{Y_1}{X_1} = \frac{Y_1}{X_1} \left(1 - X_1 - Z \right)
    \end{align*}
and 
\begin{align*}
    (1-X_1-Z)^{'} =  \left( \sum_{j=1}^r d_j X_j^2 - \frac{\varepsilon}{2} \mathcal{L}^2 \right) (1-X_1-Z) - \sum_{j=1}^r d_j X_j^2 + X_1 + (d_1-1)Y_1^2 - Z + Z^2.
\end{align*}
In particular, 
\begin{align*}
    (1-X_1-Z)^{'}_{|Z=1-X_1} &  = - \sum_{j=1}^r d_j X_j^2 + (d_1-1)Y_1^2 + X_1^2.
\end{align*}

In case (a) it follows that 
\begin{align*}
    (1-X_1-Z)^{'}_{|Z=1-X_1} \geq X_1^2 \left( -(n-1) + (d_1-1) \left( \frac{Y_1}{X_1} \right)^2 \right) \geq 0.
\end{align*}
Thus the term $1-X_1-Z$ remains nonnegative and thus $\frac{Y_1}{X_1}$ is non-decreasing. 

In case (b) observe that
\begin{align*}
    (1-X_1-Z)^{'}_{|Z=1-X_1} \leq (d_1-1) X_1^2 \left( \left( \frac{Y_1}{X_1} \right)^2 -1 \right) \leq 0.
\end{align*}
Therefore $1-X_1-Z$ remains nonpositive and $\frac{Y_1}{X_1}$ is non-increasing. 
\end{proof}

\begin{remark}
\label{SigmaAlongLimitTrajectories}
    \normalfont
    Note that the condition $1-X_1-Z>0$ is equivalent to $\frac{d}{ds} \frac{Y_1}{X_1}>0.$ It follows that the trajectory $\gamma$ of the subsystem in section \ref{SectionSubsystem} eventually satisfies part (a) of Proposition \ref{QuotientYX} as $\frac{Y}{X} \to \infty$ as $s \to \infty.$

    Similarly, Einstein trajectories eventually satisfy part (b) of Proposition \ref{QuotientYX} due to the asymptotics of section \ref{SectionAsymptoticsMetric}.

\end{remark}

\begin{remark}
    \normalfont
    Note that $Z \to 1$ both along regular trajectories as well as along trajectories of the subsystem of section \ref{SectionSubsystem}. In fact, it follows 
    from $(Z-1)_{|Z=1}^{'} \leq nX_1^2 - X_1 \leq 0$ that $Z-1 \leq 0$ is preserved. Moreover, an application of lemma \ref{LemmaAsymptotics} to 
    \begin{align*}
    \frac{d}{ds} \frac{Z-1}{X_1} = \frac{Z-1}{X_1} (1 -2Z )- Z - (d_1-1) \frac{Y_1^2}{X_1} + \frac{\sum_{j=1}^r d_j X_j^2}{X_1}
    \end{align*}
    shows that $\frac{Z-1}{X_1} \to -1$ as $s \to \infty.$ 
\end{remark}

By quantifying the strategy of the proof of lemma \ref{AsymptoticLemma}, we obtain continuity of $\sigma_1,$ which is the last ingredient for the proof of lemma \ref{Simga1Realized}.

\begin{proposition}
\label{Sigma1Continuous}
    $\sigma_1$ is continuous on the set of regular trajectories.  
\end{proposition}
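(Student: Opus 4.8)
The plan is to obtain continuity of $\sigma_1$ by deriving locally uniform estimates for the quantity $Y_1/\mathcal{L}^2$ (equivalently $X_1/Y_1 \to \sigma_1^{-1}$) and then invoking continuous dependence of solutions of ODEs on initial conditions. Recall from the proof of Lemma \ref{AsymptoticLemma} that along a regular trajectory there is a time $T>0$ and a constant $\tilde c_1>0$ with
\begin{align*}
    \left| \frac{d}{dt} \frac{Y_1}{\mathcal{L}^2} \right| \le \tilde c_1 \, \frac{Y_1}{\mathcal{L}^2} \, \frac{1}{t^3}
\end{align*}
for $t \ge T$, so that $Y_1/\mathcal{L}^2$ converges to a positive limit, namely $\frac{\varepsilon}{2}\sigma_1$. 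The first step is to show that the pair $(T, \tilde c_1)$ can be chosen \emph{locally uniformly}: for a given regular trajectory $\gamma_0$ there is a neighborhood $U$ of its initial data and constants $T_U, \tilde c_U$ so that the above inequality holds for every regular trajectory with initial data in $U$. This requires quantifying, locally uniformly, the asymptotic facts used in the proof of Lemma \ref{AsymptoticLemma}: that $\mathcal{L}, X_i, Y_i \to 0$, that $X_i/\mathcal{L}^2 \to \frac{\varepsilon}{2}$, that $\frac{\varepsilon}{2}\mathcal{L}\cdot t \to 1$, and that $\big|-\sum_j d_j X_j^2 + \frac{\varepsilon}{2}\mathcal{L}^2 - X_1\big| \le c_1 \mathcal{L}^4$. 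All of these are obtained (following \cite{DWExpandingSolitons} and Remark \ref{AsymptoticsXL2YL}) from comparison arguments with explicit decaying barriers; the point is that the rates and thresholds in those barriers depend only on quantities — such as $\varepsilon$, the dimensions $d_i$, and upper bounds for $\mathcal{L}, X_i, Y_i$ together with a positive lower bound for $\mu_i\frac{Y_i^2}{\mathcal{L}^2}+\frac{\varepsilon}{2} = \bar F_i$ (here $\mu_i=0$, so this is just $\frac{\varepsilon}{2}$ for $i\ge2$ and controlled by $Y_1/\mathcal{L}$ for $i=1$) — that vary continuously with the initial data. So one fixes a large compact time interval $[0,S]$, uses continuous dependence there to get uniform bounds on $[0,S]$ for nearby trajectories, and then propagates the barrier estimates from $s=S$ onward with constants depending only on the state at $s=S$.

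Given this, the second step is the telescoping argument for $\sigma_1$ itself. Writing $h(t) = \frac{Y_1}{\mathcal{L}^2}(t)$, the differential inequality $|h'| \le \tilde c_U \, h / t^3$ with $\tilde c_U, T_U$ uniform on $U$ gives, by comparison with the explicit solution $g(t) = g_0 \exp\!\big( \tfrac{\tilde c_U}{2}(t^{-2} - T_U^{-2}) \big)$ as in Lemma \ref{AsymptoticLemma},
\begin{align*}
    \left| \log h(\infty) - \log h(T_U) \right| \le \frac{\tilde c_U}{2 T_U^2},
\end{align*}
and more importantly $\big| \log h(\infty) - \log h(t) \big| \le \frac{\tilde c_U}{2 t^2}$ for all $t \ge T_U$, uniformly over $U$. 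This says the tail of $h$ converges to its limit at a locally uniform rate. Hence, given $\eta>0$, choose $t_*$ large with $\tilde c_U/(2 t_*^2) < \eta/3$; by continuous dependence of solutions on initial conditions on the compact interval $[0, t_*]$ (after transporting to the $t$-variable, where the system is smooth), $h(t_*)$ — and therefore $\log h(t_*)$ — depends continuously on the initial data; shrinking $U$ makes $|\log h(t_*) - \log h_0(t_*)| < \eta/3$ for trajectories in $U$. Combining the three estimates gives $|\log h(\infty) - \log h_0(\infty)| < \eta$, i.e. continuity of $\sigma_1^{-1} = \frac{2}{\varepsilon} h(\infty)$, and since $\sigma_1 \in (0,\infty)$ this is equivalent to continuity of $\sigma_1$.

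The main obstacle is the \emph{local uniformity} claimed in the first step: one must verify that the chain of ODE comparison arguments establishing $X_i/\mathcal{L}^2 \to \frac{\varepsilon}{2}$, $Y_i/\mathcal{L}\to 0$ and the quartic error bound $\big|-\sum_j d_j X_j^2 + \frac{\varepsilon}{2}\mathcal{L}^2 - X_1\big| \le c_1 \mathcal{L}^4$ can be run with constants and thresholds that are locally uniform, rather than merely trajectory-by-trajectory. The cleanest way to organize this is to fix, for the reference trajectory $\gamma_0$, a time $S$ at which the relevant barrier inequalities hold with strict slack; then for initial data near $\gamma_0$, continuous dependence on $[0,S]$ forces the same (slightly relaxed) inequalities at $s=S$, and from $s \ge S$ one invokes the comparison principle with constants read off at $s=S$ — these depend continuously on the state at $S$, hence are bounded on a neighborhood. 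Everything downstream (the bound $|h'| \le \tilde c_U h / t^3$ and the telescoping estimate) then inherits local uniformity mechanically. The remaining inputs — continuous dependence of ODE solutions on initial conditions over compact time intervals, and the fact that regularity of the trajectory (the open conditions $\mathcal{L}, X_i, Y_i>0$, $\mathcal{S}_1<0$, $\mathcal{S}_2<0$) is itself an open condition on initial data — are standard.
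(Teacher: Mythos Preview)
Your proposal is correct and follows essentially the same two-step strategy as the paper: establish that the constant in the tail estimate \eqref{eqAsBounds} can be chosen locally uniformly, then combine comparison with the explicit solutions of $g' = \pm \tilde c\, g/t^3$ and continuous dependence on compact time intervals. The paper streamlines your first step by exploiting the monotonicity of $Y_1/\mathcal{L}^2$ from Proposition \ref{PreservedInequalities} (in place of generic barrier propagation) and by deriving a separate uniform bound on $t\mathcal{L}$ via its own ODE together with Proposition \ref{OriginAttractor}, which is what allows one to convert the $\mathcal{L}^4$-bound into the $t^{-3}$-bound uniformly.
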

\begin{proof}

    Pick some regular trajectory $\gamma$ and fix some $T>0$. Pick some small compact neighborhood $K$ of $\gamma(T)$. We want to establish that the constant of equation \eqref{eqAsBounds} may be chosen uniformly for all trajectories passing through $K$.
    
    Since $\frac{Y_1}{\mathcal{L}^2}$ is always decreasing by proposition \ref{PreservedInequalities}, it is bounded in terms of its value at $\gamma(T)$ for all trajectories passing through $K$ and times $t\ge T$. Plugging this into the differential equation for $\frac{X_i}{\mathcal{L}^2}$ we get a priori bounds for $\frac{X_i}{\mathcal{L}^2}$ as well, using that $\mu_i=0$ and that the conservation law implies $\frac{\varepsilon}{2}\mathcal{L}^2\leq X_1 \leq \frac{1}{d_1}, Y_1\le 1$.
    This in turn gives a priori bounds for $\frac{X_1-\frac{\varepsilon}{2}\mathcal{L}^2}{\mathcal{L}^4}$.

    With $\frac{d}{ds} = \mathcal{L}\frac{d}{dt}$ we find that 
    \begin{align*}
        \frac{d}{ds}t\mathcal{L} &= \mathcal{L}^2 + t\mathcal{L} \left( \sum_{i=1}^r d_i X_i^2 - \frac{\varepsilon}{2}\mathcal{L}^2 \right) \\
        &\le \mathcal{L}^2 + t\mathcal{L} \left( c(\gamma, T) \mathcal{L}^4 - \frac{\varepsilon}{2}\mathcal{L}^2 \right).
    \end{align*}

    We may choose $T'>T$ such that $\gamma(T')$ is close enough to the origin such that lemma \ref{OriginAttractor} applies. Hence we may assume $c\mathcal{L}^2<\frac{\varepsilon}{4}$ for all $t>T'$ and all trajectories passing through $K$ (after possibly shrinking $K$). This gives

    \begin{align*}
        \frac{d}{ds}t\mathcal{L} \le \mathcal{L}^2 \left(1 - \frac{\varepsilon}{4}t\mathcal{L} \right),
    \end{align*}
    which gives a uniform bound for $t\mathcal{L}$ depending only on its value at $\gamma(T')$.

    In particular, we do obtain that the constant of equation \eqref{eqAsBounds} may be chosen uniformly for trajectories passing through $K$. We now deduce continuity $\sigma_1$ at $\gamma$.

    For $\alpha_1 >0$ we find $T_1$ such that $\frac{Y_1}{X_1}(\gamma(t)) \leq (1 + \alpha_1 ) \sigma_1(\gamma)$ for all $t \geq T_1.$

    Note that the comparison solution $g^{\pm}(t)$ of
    \begin{align*}
        \frac{dg^{\pm}}{dt}=\pm c \frac{g^{\pm}}{t^3}, \ g^{\pm}(T_0)=g_0
    \end{align*}
    converges for each $T_0$ and that the limit converges to $g_0$ as $T_0 \to \infty$. In particular, for $\alpha_2>0$ there is $T_2>0$ such that $\lim_{t \to \infty} g^{+}(t) \leq (1+\alpha_2) g_0$ for $T_0 \geq T_2.$

    Choose $T\ge T_1, T_2$. By continuity, for each $\alpha_3>0$ there is a compact neighborhood $K$ of $\gamma(T)$ such that 
    \begin{align*}
        \frac{Y_1}{X_1}(\tilde{\gamma}(T)) \leq (1 + \alpha_3 )  \frac{Y_1}{X_1}(\gamma(T))
    \end{align*}
    for all $\tilde{\gamma}$ with $\tilde{\gamma}(T) \in K.$

    If $g^+$ denotes the comparison solution with $g^{+}(T)=\frac{Y_1}{X_1}(\Tilde{\gamma}(T)),$ then  
    \begin{align*}
        \sigma_1( \Tilde{\gamma} ) \leq \lim_{t \to \infty} g^{+}(t) \leq (1+ \alpha_1)(1+\alpha_2)(1+\alpha_3) \sigma_1(\gamma).
    \end{align*}
    The lower bound follows analogously using $g^{-}$.
\end{proof}

\begin{proof}[Proof of Lemma \ref{Simga1Realized}]

Fix $\Bar{f}_2, \ldots, \Bar{f}_r>0.$ In proposition \ref{Sigma1Continuous} we proved that $\sigma_1$ depends continuously on $C<0.$ Therefore it suffices to show that $\sigma_1 \to \infty$ as $C \to -\infty$ and $\sigma_1 \to 0$ as $C \to 0.$

Pick $\Bar{\sigma}>\sqrt{\frac{n-1}{d_1-1}}.$ According to remark \ref{SigmaAlongLimitTrajectories}, we can parametrize the limit system trajectory $\gamma$ of section \ref{SectionSubsystem} such that it satisfies $\frac{Y_1}{X_1}>\sigma_0$ and $1-X_1-Z>0$ for $s\ge 0.$

Note that for any $\Bar{f}_2, \ldots, \Bar{f}_r>0$ the Ricci soliton system converges to the limiting system as $C \to - \infty.$ By continuous dependence on initial conditions we may thus pick $\delta_0 >0$ and $C_0<0$ such that the trajectory corresponding to the initial conditions $(\Tilde{f}_2, \ldots, \Tilde{f}_r, C)$ passes through a small neighborhood of $\gamma(0)$ for all $\Tilde{f}_i \in (\Bar{f}_i- \delta_0, \Bar{f}_i+\delta_0)$ and $C<C_0$. In particular, we may choose parametrizations such that they are within this neighborhood at $s=0$. We may then assume that $\frac{Y_1}{X_1}(0)>\Bar{\sigma}$ and $(1-X_1-Z)(0)>0$ for all these curves. Proposition \ref{QuotientYX} shows that $\frac{Y_1}{X_1}$ is non-decreasing along all of these curves for $s \geq 0$ and thus the limit $\sigma_1$ must be larger than $\Bar{\sigma}.$

Similarly, to show that $\sigma_1 \to 0$ as $C \to 0,$ consider for $\Bar{f}_2, \ldots, \Bar{f}_r>0$ the trajectory with initial condition $(\Bar{f}_2, \ldots, \Bar{f}_r,0)$, which is Einstein. In particular, according to section \ref{SectionAsymptoticsMetric}, $Y_1 \to 0$ while $X_1 \to \frac{1}{n}$ as $t \to \infty$. By continuous dependence on initial conditions, given $\varepsilon_0>0$ we find $\delta_0>0$ and $C_0<0$ such that for all $\Tilde{f}_i \in (\Bar{f}_i - \delta_0, \Bar{f}_i +\delta_0)$ and $C_0<C<0$ the trajectory corresponding to the initial condition $(\Tilde{f}_2, \ldots, \Tilde{f}_r, C)$ satisfies $\frac{Y_1}{X_1}<\varepsilon_0$ as well as $1-X_1-Z<0$ at some $t=T>0$. By proposition \ref{QuotientYX}, $\frac{Y_1}{X_1}<\varepsilon_0$ is preserved and thus $\sigma_1 < \varepsilon_0.$ 
\end{proof}


\end{document}